\theoremstyle{plain}
\newtheorem{theorem}{Theorem}[section]
\newtheorem{lemma}[theorem]{Lemma}
\newtheorem{corollary}[theorem]{Corollary}
\theoremstyle{definition}
\newtheorem{definition}[theorem]{Definition}
\theoremstyle{remark}
\newtheorem{remark}[theorem]{Remark}
\begin{document}

\title[Well-posedness and blow-up criteria]
{On the Cauchy problem and asymptotic behavior for a three-component Novikov system}

\author{Zhi-gang Li}
\address{College of sciences, China University of Mining and Technology, Beijing 100083, P. R. China}
\email{lzgcumtb@163.com}

\author{Zhonglong Zhao*}
\address{Department of Mathematics, North University of China, Taiyuan, Shanxi 030051, P. R. China}
\email{zhaozl@nuc.edu.cn}

\keywords{well-posedness, three-component Novikov system, asymptotic behavior, traveling wave solution}

\begin{abstract}
This paper is mainly concerned with the well-posedness and exponential decay of solution for a integrable three-component Novikov system, which admits bi-Hamiltonian structure and infinitely many conserved quantities. The local well-posedness of this system in critical Besov space is established. The exponential decay of the solutions at infinity is also proved.\\
Correspondence should be addressed to Zhonglong Zhao; zhaozl@nuc.edu.cn
\end{abstract}

\maketitle

\section{Introduction}
In this paper, we consider the following three-component Novikov system (3NS) :
\begin{equation}
\begin{cases}
\rho_t+(\rho uv)_x=0,\\
m_t+3mu_xv+m_xuv+\rho^2u=0,\\
n_t+3nuv_x+n_xuv-\rho^2v=0,\\
m=u-u_{xx},n=v-v_{xx},
\end{cases}
\end{equation}
which was constructed by Li[1]. The author showed that it is equivalent to the zero-curvature equation
$$U_t-V_x+[U,V]=0,$$
where the spacial and temporal 3$\times$3 matrices $U$ and $V$ are
\begin{equation*}
U=\left(\begin{matrix}
0 & 1 & 0\\
1+\lambda\rho^2 & 0 &m\\
\lambda n & 0 & 0
\end{matrix}\right),
V=\left(\begin{matrix}
\frac{1}{3\lambda}+uv_x & -uv & \frac{u}{\lambda}\\
u_xv_x-\lambda\rho^2 uv & \frac{1}{3\lambda}-u_xv & \frac{u_x}{\lambda}-muv\\
-\lambda nuv-v_x & v & u_xv-uv_x-\frac{2}{3\lambda}
\end{matrix}\right),
\end{equation*}
the real number $\lambda \in \mathbb{R}$ is spectral parameter.

System (1) is obvious an extension of the following two-component Novikov system (2NS) within $\rho=0$,
\begin{equation*}
\begin{cases}
m_t+3u_xvm+uvm_x=0,\\
n_t\ +3v_xun\ +uvn_x\ =0,\\
m=u-u_{xx}, n=v-v_{xx},
\end{cases}
\end{equation*}
which was shown by Geng and Xue [2], who proved the integrability by arising the zero curvature equation. The authors also supplied many significant results, such as Hamiltonian structure, infinite many conserved quantities and explicit multi-peakon traveling wave solutions. It should be noted that the bi-Hamiltonian structure was found by Li and Liu[3], which means 2NS is also integrable in Liouville sense. Himonas and Mantzavinos considered the well-posedness and uniformly continuous in[4], the global existence can refer to Li, Hu and Wu in [5].

As 2NS is a multi-component system, we can exploit some reductions to reduce it into some single-component equations. For $u=v$, $2N$ system is reduced to Novikov equation (NE) [6],
\begin{equation*}
m_t+u^2m_x+3uu_xm=0,\ \ \ m=u-u_{xx},
\end{equation*}
which was firstly constructed by Novikov via the symmetry classification method. The integrability of NE was shown by Hone and Wang, who proved that it is integrable both in Lax and Liouville sense, which means it admits bi-Hamiltonian structure, infinitely many conserved quantities and can also be derived by Lax equation [7]. They also showed that NE is associated to the negative flow in the Sawada-Kotera hierarchy. The Cauchy problem and ill-posedness of such equation can refer to[8-9], while for periodic case and for $s > \frac{5}{2}$, well-posedness had been proved by Tiglay[10], and the blow-up phenomena has studied in[11].

Another important reduction of 2NS is Degasperis-Procesi equation (DPE) [12] if we take $v=1$,
$$m_t+um_x+3u_xm=0, \ \ \ m=u-u_{xx}.$$
It was proposed by Degasperis and Procesi, who considered the asymptotic integrability to the following dispersive PDE,
\begin{equation*}
u_t- \alpha^2u_{xxt} + \gamma u_{xxx} +c_0u_x =(c_1u^2+c_2u_x^2+c_3uu_{xx})_x.
\end{equation*}
In fact, if we change the coefficient of term $u_xm$ to 2, it is just the famous Camassa-Holm equation (CHE)[13-15]. DPE is another integrable equation of b-family for $b=3$. As the same as CHE, DPE also arises bi-Hamiltonian structure, infinite many conserved quantities and peaked solutions, and it is connected with a negative flow in the Kaup-Kupershmidt hierarchy throw reciprocal transformation [16]. The well-posedness and stability of DPE have been shown in[17-18].

It should be noted that the nonlinearities in DPE are quadratic. However, for NE and 2NS, the nonlinearities are cubic, thus it is more appropriate to call system (1) three-component ``Novikov" system, rather than three-component ``Degasperis-Procesi" system the author used in [1]. Very recently, we have just studied the Cauchy problem of 3NS in Besov space $B_{p,r}^s$ with $s>\max\{\frac{1}{2},\frac{1}{p}\}$ in [19]. Note that the following embedding holds
$$H^s=B_{2,2}^s\hookrightarrow B_{2,1}^\frac{1}{2}\hookrightarrow H^\frac{1}{2} \hookrightarrow B_{2,\infty}^\frac{1}{2} \hookrightarrow H^{s_1},\ \ \ \forall s>\frac{1}{2}>s_1,$$
it is interested to consider the local well-posedness in critical Besov space $B_{2,1}^\frac{1}{2}$, and we obtain such property by the linear transport equations theory. Next, we study the exponential decay of solution with initial data $(u_0,v_0)\sim\mathcal{O}(e^{-\delta|x|}), \delta \in (0,1)$ and $(\rho_0,m_0,m_0)\sim\mathcal{O}(e^{-\delta|x|}), \delta \in (0,\infty)$ as $x\rightarrow \pm\infty$. Finally, we obtain the asymmetric property of nontrivial traveling wave solutions, which is very different from most CH type systems as they admit peaked solutions with symmetry axis $x=ct$.

The structure of our paper is organized as follows. In Section 2, the local well-posedness of 3NS in critical Besov space is obtained. In section 3, the exponential decay of solution is studied, within some suitable decay condition for initial data. The asymmetric property of traveling solution is established in last section.

\textbf{Notation.} All the spaces of functions we consider in this paper are over $\mathbb{R}$, we omit $\mathbb{R}$ from our notation. We say $f(x)\sim\mathcal{O}(e^{\delta x})$ as $x\rightarrow \infty$ means $\displaystyle\lim_{x\rightarrow \infty} \dfrac{|f(x)|}{e^{\delta x}} \le L$ for some $L>0$, and $f(x)\sim o(e^{\delta x})$ as $x\rightarrow \infty$ if $\displaystyle\lim_{x\rightarrow \infty} \dfrac{|f(x)|}{e^{\delta x}}=0$.

\section{Local well-posedness in critical Besov space $B_{2,1}^\frac{1}{2}$}
In this section, we establish local well-posedness of 3NS in critical Besov space, the associated initial value problem is
\begin{equation}
\begin{cases}
\rho_t+(\rho uv)_x=0,\\
m_t+3mu_xv+m_xuv+\rho^2u=0,\\
n_t+3nuv_x+n_xuv-\rho^2v=0,\\
m=u-u_{xx},n=v-v_{xx},\\
(\rho(t),u(t),v(t))|_{t=0}=(\rho_0,u_0,v_0).
\end{cases}
\end{equation}
Let's recall the local well-posedness of 3NS in common Besov space first.
\begin{lemma}[19]
Let $1 \le p,r \le \infty$, $s >max\{\tfrac{1}{p},\tfrac{1}{2}\}$, and $(\rho_0,m_0,n_0) \in (B_{p,r}^s)^3$. Then there exists some $T>0$, such that system (1) has a unique solution in $E_{p,r}^s(T)$, and the map $(\rho_0, m_0, n_0) \rightarrow (\rho, m, n)$ is H{\"o}lder continuous from a neighborhood of $(\rho, m_0, n_0)$ in $(B_{p,r}^s)^3$ into $E_{p,r}^{s'}$, for every $s'<s$ when $r=+\infty$ and $s'=s$ when $r<+\infty$, where $E_{p,r}^s(T)$ is defined by
\begin{equation*}
\begin{cases}
E_{p,r}^s(T)\ =[C([0,T];B_{p,r}^s) \cap C^1([0,T];B_{p,r}^{s-1})]^3, if \ r < +\infty\\
E_{p,\infty}^s(T)=[L^{\infty}([0,T];B_{p,\infty}^s) \cap Lip([0,T];B_{p,\infty}^{s-1})]^3
\end{cases}
\end{equation*}
and $E_{p,r}^s=\bigcap_{T>0}E_{p,r}^s(T).$
\end{lemma}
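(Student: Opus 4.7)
The plan is to transform the system (1) into a coupled system of transport equations with nonlocal forcing, and then close an iteration scheme in the Besov spaces $B^s_{p,r}$ using transport estimates of Bahouri--Chemin--Danchin type. Setting $u=(1-\partial_x^2)^{-1}m$ and $v=(1-\partial_x^2)^{-1}n$, the system can be rewritten in transport form with a common convection velocity $uv$:
\begin{align*}
\rho_t + uv\,\rho_x &= -\rho(u_xv+uv_x),\\
m_t + uv\, m_x &= -3u_xv\,m - \rho^2 u,\\
n_t + uv\, n_x &= -3uv_x\,n + \rho^2 v.
\end{align*}
Note that the forcing terms are slightly smoother than the unknowns (two derivatives are gained through the inversion of $1-\partial_x^2$), which is the key structural feature that permits a fixed-point argument at the level of $(\rho,m,n)$.

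Next I would construct an approximating sequence $(\rho^k,m^k,n^k)_{k\ge 0}$ by solving, at each step, linear transport equations with velocity $u^kv^k$ and forcing built from the previous iterate. The fundamental Besov transport estimate
$$\|f\|_{L^\infty_T B^s_{p,r}} \le e^{CV(T)}\Big(\|f_0\|_{B^s_{p,r}} + \int_0^T e^{-CV(\tau)}\|F(\tau)\|_{B^s_{p,r}}\,d\tau\Big),\qquad V(T)=\int_0^T \|\partial_x(uv)\|_{B^{s-1}_{p,r}\cap L^\infty}\,d\tau,$$
combined with the algebra property of $B^s_{p,r}$ (valid because $s>1/p$) and the Moser-type inequalities, would then yield a uniform bound for $(\rho^k,m^k,n^k)$ in $L^\infty_T(B^s_{p,r})^3$ on a small interval $[0,T]$ whose length depends only on the initial data. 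Applying the transport estimate to successive differences in the lower-order space $B^{s-1}_{p,r}$ would show that $(\rho^k,m^k,n^k)$ is Cauchy in $C([0,T];B^{s-1}_{p,r})^3$.

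Passing to the limit, one recovers a distributional solution; the full regularity in $B^s_{p,r}$ is then restored by the Fatou property, and the time continuity into $B^s_{p,r}$ (resp.\ Lipschitz regularity into $B^{s-1}_{p,r}$) is obtained through the equation and a classical continuity-in-norm argument. Uniqueness and Hölder continuity of the data-to-solution map follow by applying the same transport estimates to the difference of two solutions and interpolating between $B^{s-1}_{p,r}$ and $B^s_{p,r}$.

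The main obstacle is the sharp accounting of regularity in the quadratic/cubic nonlinearities $\rho^2 u,\ \rho^2 v,\ u_xv\,m,\ uv_x\,n$: the twin conditions $s>1/p$ and $s>1/2$ are dictated exactly by the need to make $B^s_{p,r}$ an algebra, to embed it in $L^\infty$ (which controls $V(T)$), and to close Moser-type estimates for the product $\rho^2$ arising from the coupling with the density equation. When $r=\infty$, the transport estimate loses a logarithm, so strong continuity in the full $B^s_{p,\infty}$-norm is not expected and one can only hope for Hölder continuity of the flow into $B^{s'}_{p,\infty}$ with $s'<s$, which explains the dichotomy in the statement.
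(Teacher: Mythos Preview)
Your proposal is correct and follows exactly the standard route that the paper attributes to this lemma (which is quoted from reference~[19] without proof here): rewrite the system as three transport equations with common velocity $uv$, run a Friedrichs-type iteration, obtain uniform $B^s_{p,r}$ bounds via the transport estimate combined with the algebra property and Moser inequalities (which is precisely where the hypothesis $s>\max\{1/p,1/2\}$ enters), show the iterates are Cauchy in $B^{s-1}_{p,r}$, and recover uniqueness and H\"older continuity by estimating differences and interpolating. The paper explicitly confirms this is the intended method (``in the proof of Lemma~2.1, one of the crucial techniques is 1-D Moser inequality''), and its own proof of Theorem~2.2 in the critical case is the same scheme with the Moser step replaced by a logarithmic interpolation argument.
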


Our main result in this section is stated as follows.
\begin{theorem}
Suppose $(\rho_0,m_0,n_0)\in B_{2,1}^{\frac{1}{2}}$. Then there exists some $T>0$, such that 3NS has a unique solution in $C([0,T);B_{2,1}^{\frac{1}{2}})\bigcap C^1([0,T);B_{2,1}^{-\frac{1}{2}})$ such that the map
$$(\rho_0,m_0,n_0) \rightarrow (\rho,m,n):B_{2,1}^\frac{1}{2} \rightarrow C([0,T);B_{2,1}^{\frac{1}{2}})\bigcap C^1([0,T);B_{2,1}^{-\frac{1}{2}})$$
is H\"{o}lder continuous.
\end{theorem}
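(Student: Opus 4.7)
The plan is to adapt the transport-equation approach used for Lemma 2.1, exploiting the facts that $B_{2,1}^{1/2}$ is a Banach algebra and embeds into $L^\infty$, even though this is the borderline case $s=1/p$. First, using the convolution kernel $p(x)=\tfrac12 e^{-|x|}$, I would recover $u=p\ast m$ and $v=p\ast n$, so that $u,v$ gain two derivatives over $m,n$ in any Besov space, and I would rewrite (2) as the quasilinear transport system
\begin{equation*}
\begin{cases}
\rho_t + uv\,\rho_x = -(uv)_x\,\rho,\\
m_t + uv\,m_x = -3u_x v\,m - \rho^2 u,\\
n_t + uv\,n_x = -3u v_x\,n + \rho^2 v.
\end{cases}
\end{equation*}

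Next I would set up a Picard scheme: let $(\rho^0,m^0,n^0)=(\rho_0,m_0,n_0)$ (smoothed or frequency-localized if needed) and define $(\rho^{k+1},m^{k+1},n^{k+1})$ as the solution of the linear transport system obtained by freezing $u=p\ast m^k$, $v=p\ast n^k$ in the velocity and source. The key estimate is the transport inequality in $B_{2,1}^{1/2}$: for a solution of $f_t+a f_x=F$,
\begin{equation*}
\|f(t)\|_{B_{2,1}^{1/2}} \le \|f_0\|_{B_{2,1}^{1/2}} + \int_0^t \bigl(C\|\partial_x a(\tau)\|_{B_{2,1}^{1/2}}\|f(\tau)\|_{B_{2,1}^{1/2}} + \|F(\tau)\|_{B_{2,1}^{1/2}}\bigr)\,d\tau.
\end{equation*}
Combining this with the algebra estimate $\|fg\|_{B_{2,1}^{1/2}}\lesssim \|f\|_{B_{2,1}^{1/2}}\|g\|_{B_{2,1}^{1/2}}$ and with $\|u\|_{B_{2,1}^{5/2}}\lesssim\|m\|_{B_{2,1}^{1/2}}$, I would show by an induction--Gronwall argument that there is $T=T(\|\rho_0,m_0,n_0\|_{B_{2,1}^{1/2}})>0$ on which the iterates remain uniformly bounded in $C([0,T];B_{2,1}^{1/2})$ and uniformly Lipschitz into $B_{2,1}^{-1/2}$ (this latter bound coming directly from the equations, since each nonlinear term lies in $B_{2,1}^{1/2}\subset B_{2,1}^{-1/2}$).

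For convergence I would estimate successive differences $(\delta\rho^k,\delta m^k,\delta n^k)=(\rho^{k+1}-\rho^k,\ldots)$ in the weaker space $B_{2,1}^{-1/2}$, again through the transport inequality; since taking a difference loses one derivative in the source, working one index below $1/2$ is natural and the argument closes. This gives a Cauchy sequence in $C([0,T];B_{2,1}^{-1/2})$; the uniform $B_{2,1}^{1/2}$ bound plus Fatou's property for Besov norms then places the limit $(\rho,m,n)$ in $L^\infty([0,T];B_{2,1}^{1/2})$, and passing to the limit in the equations yields a solution. Continuity in time with values in $B_{2,1}^{1/2}$ follows from the standard fact that solutions of transport equations with coefficients and source in $L^1([0,T];B_{2,1}^{1/2})$ are continuous into $B_{2,1}^{1/2}$ when $r=1$, and continuity into $B_{2,1}^{-1/2}$ of the time derivative is then read off from the equations.

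Uniqueness would be proved by the same $B_{2,1}^{-1/2}$ difference estimate applied to two solutions with the same initial datum, giving Lipschitz dependence in $B_{2,1}^{-1/2}$. For H\"older continuity of the solution map I would interpolate: two data with $B_{2,1}^{1/2}$ difference $\varepsilon$ give solutions whose difference is of size $\varepsilon$ in $B_{2,1}^{-1/2}$ and bounded in $B_{2,1}^{1/2}$, so the interpolation inequality $\|\cdot\|_{B_{2,1}^{1/2-\eta}}\lesssim \|\cdot\|_{B_{2,1}^{-1/2}}^{\eta}\|\cdot\|_{B_{2,1}^{1/2}}^{1-\eta}$ for $\eta\in(0,1)$ yields H\"older continuity at any subcritical level, and with a standard Bona--Smith type mollification argument one upgrades to H\"older continuity into the critical space itself.

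I expect the main obstacle to be the critical regularity: standard transport estimates in $B_{p,r}^{s}$ require either $s>1+1/p$ or $s=1/p$ with $r=1$, and the nonlinear terms $\rho^2 u$, $u_x v\,m$, $(uv)_x\rho$ must be controlled in $B_{2,1}^{1/2}$ without any margin. The delicate point is verifying that the commutator-type terms arising from the Littlewood--Paley localization of the transport equation with velocity $uv$ remain summable in $B_{2,1}^{1/2}$; this is precisely where the $r=1$ index is essential and where the algebra property of $B_{2,1}^{1/2}$ has to be used cleanly together with the two-derivative smoothing coming from $p\ast$.
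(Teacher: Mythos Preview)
Your overall architecture---Picard iteration on the transport formulation, uniform bounds of the iterates in $B_{2,1}^{1/2}$ via the algebra property and the two-derivative gain $u=p\ast m$, convergence of differences in a weaker space, then interpolation---matches the paper's Steps~1--4. Step~2 in particular goes through exactly as you describe.

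The gap is in your convergence and uniqueness steps. You propose to estimate the differences directly in $B_{2,1}^{-1/2}$ ``again through the transport inequality,'' but the a~priori transport estimate (Lemma~2.3, i.e.\ BCD Theorem~3.14) at the lower endpoint $s=-d\min(1/p_1,1/p')=-\tfrac12$ is \emph{only} available for $r=\infty$; the commutator lemma behind it produces a sequence that is merely in $\ell^\infty$, not $\ell^1$, at this borderline index. So the inequality you invoke for $f_t+af_x=F$ in $B_{2,1}^{-1/2}$ is not at your disposal, and the loop does not close in $B_{2,1}^{-1/2}$. The same obstruction blocks your Gronwall-based uniqueness argument.

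The paper's remedy is to run Step~3 and Step~4 in $B_{2,\infty}^{-1/2}$. This introduces a mismatch: the endpoint product law used for the source terms,
\[
\|fg\|_{B_{2,\infty}^{-1/2}} \le C\,\|f\|_{B_{2,\infty}^{1/2}\cap L^\infty}\,\|g\|_{B_{2,1}^{-1/2}},
\]
forces the $B_{2,1}^{-1/2}$ norm of the differences on the right-hand side while the transport estimate only controls the $B_{2,\infty}^{-1/2}$ norm on the left. The paper bridges this with the logarithmic interpolation inequality
\[
\|h\|_{B_{2,1}^{-1/2}} \le C\,\|h\|_{B_{2,\infty}^{-1/2}}\,\log\!\Big(e+\tfrac{\|h\|_{B_{2,\infty}^{1/2}}}{\|h\|_{B_{2,\infty}^{-1/2}}}\Big),
\]
which converts the integral inequality into one with modulus $\mu(r)=r(1-\log r)$; Gronwall no longer applies, and Osgood's lemma (Lemma~2.4) is used instead, both to get the Cauchy property and to obtain uniqueness/continuous dependence. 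For H\"older continuity \emph{at} the critical index, the paper does not use Bona--Smith but interpolates the resulting $B_{2,\infty}^{-1/2}$ bound against the Lipschitz bound in $B_{2,\infty}^{s}$, $s>\tfrac12$, already provided by Lemma~2.1, via $\|f\|_{B_{2,1}^{1/2}}\le C_\theta\|f\|_{B_{2,\infty}^{s}}^{\theta}\|f\|_{B_{2,\infty}^{-1/2}}^{1-\theta}$.
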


The reason we come back to the Cauchy problem is that in the proof of Lemma 2.1, one of the crucial technique is 1-D Moser inequality [x]
$$\|fg\|_{B_{p,r}^{s_1}} \le C\|f\|_{B_{p,r}^{s_1}}\|g\|_{B_{p,r}^{s_2}},$$
for $s_1 \le \tfrac{1}{p}, s_2 > \tfrac{1}{p}(s_2 \ge \tfrac{1}{p}\ if\ r=1)$,and $s_1+s_2>0$, which is failed in critical sense as $s_1=-\frac{1}{2}$ and $s_2=\frac{1}{2}$.
Thus we need another way to overcome this difficulty. In order to prove Theorem 2.2, we need the following two lemmas.

\begin{lemma}$[20]$(A priori estimates in Besov spaces) Consider the following transport equation
\begin{equation*}
\begin{cases}
\partial_t f+v\cdot \nabla f =g,\\
f|_{t=0}=f_0.
\end{cases}
\end{equation*}
Let $1 \le p \le p_1 \le \infty$, $1 \le r \le \infty$, $s \ge -d\cdot min(\tfrac{1}{p_1},\tfrac{1}{p'})$. For the solution $ f \in L^{\infty}([0,T];B_{p,r}^s(\mathbb{R}^d))$ of (3) with velocity $v, \nabla v \in  L^1([0,T];B_{p,r}^s(\mathbb{R}^d)\cap L^{\infty}(\mathbb{R}^d))$, initial data $f_0 \in B_{p,r}^s(\mathbb{R}^d)$ and $g\in L^1([0,T];B_{p,r}^s(\mathbb{R}^d))$, we have
$$\|f(t)\|_{B_{p,r}^s} \le \|f_0\|_{B_{p,r}^s}+\int_0^t\left(\|g(t')\|_{{B_{p,r}^s}}+CV'_{p_1}(t')\|f(t')\|_{{B_{p,r}^s}}\right)dt',$$
$$\|f(t)\|_{L_t^{\infty}(B_{p,r}^s)} \le \left(\|f_0\|_{B_{p,r}^s}+\int_0^t\exp(-CV_{p_1}(t'))\|g(t')\|_{{B_{p,r}^s}}dt'\right)\exp(CV_{p_1}),$$
where
\begin{equation*}
V_{p_1}(t)=\begin{cases}
\displaystyle\int_0^t\|\nabla v\|_{B_{p_1,\infty}^{\frac{d}{p_1}} \cap L^{\infty}} &,\  if\ \ \  s<1+\tfrac{d}{p_1},\\
\displaystyle\int_0^t\|\nabla v\|_{B_{p_1,r}^{s-1}}&,\  if\ \ \  s>1+\tfrac{d}{p_1}\  or\  s=1+\tfrac{d}{p_1}, r=1,\\
\displaystyle\int_0^t\|\nabla v\|_{B_{p_1,1}^{\frac{d}{p_1}}}  &,\  if\ \ \  s=-d\cdot \min\{\frac{1}{p'},\frac{1}{p_1}\}, r=\infty.
\end{cases}
\end{equation*}
\end{lemma}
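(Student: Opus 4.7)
The plan is to follow the standard Littlewood-Paley approach to linear transport equations in Besov spaces developed by Bahouri, Chemin, and Danchin. I would begin by applying the dyadic block operator $\Delta_q$ to the transport equation, producing
$$\partial_t \Delta_q f + v\cdot\nabla\Delta_q f = \Delta_q g + [v\cdot\nabla,\Delta_q]f.$$
The commutator $R_q:=[v\cdot\nabla,\Delta_q]f$ measures how frequency truncation fails to commute with transport, and is the only nontrivial term; everything else is routine.

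Next, I would perform an $L^p$ energy estimate on the frequency-localized equation: multiplying by $|\Delta_q f|^{p-2}\Delta_q f$ and integrating, with integration by parts on the transport term (which is where the $\|\nabla\cdot v\|_{L^\infty}$ contribution appears), yields
$$\|\Delta_q f(t)\|_{L^p} \le \|\Delta_q f_0\|_{L^p} + \int_0^t \bigl(\|\Delta_q g\|_{L^p}+\|R_q\|_{L^p}+\tfrac{1}{p}\|\nabla\cdot v\|_{L^\infty}\|\Delta_q f\|_{L^p}\bigr)\,dt'.$$
Multiplying through by $2^{qs}$ and taking the $\ell^r$-norm in $q$ transfers the estimate into the $B_{p,r}^s$ norm and produces a Gr\"onwall-ready inequality.

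The heart of the argument is the commutator estimate
$$\bigl\|2^{qs}\|R_q\|_{L^p}\bigr\|_{\ell^r_q}\le C\,V'_{p_1}(t)\,\|f\|_{B_{p,r}^s}.$$
I would derive it by decomposing $v\cdot\nabla f$ via Bony's paraproduct and remainder; the paraproduct pieces gain a factor of $2^{-q}$ through mean-value estimates on the kernel of $\Delta_q$ combined with Bernstein inequalities, which is precisely what makes the commutator one order smoother than $\nabla f$ alone would suggest. The three branches in the definition of $V_{p_1}$ correspond to the three regimes of $s$: in the subcritical range $s<1+d/p_1$ the embedding $B_{p_1,\infty}^{d/p_1}\cap L^\infty$ suffices for $\nabla v$; in the supercritical range $s>1+d/p_1$, or $s=1+d/p_1$ with $r=1$, the natural space is $B_{p_1,r}^{s-1}$ because the high-high interactions in the paraproduct can no longer be absorbed by $L^\infty$ control alone; and the bottom endpoint $s=-d\min(1/p',1/p_1)$ with $r=\infty$ forces the stronger hypothesis $\nabla v\in B_{p_1,1}^{d/p_1}$.

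The main obstacle is navigating this bottom endpoint. The condition $s\ge -d\min(1/p_1,1/p')$ is exactly the lowest regularity at which the remainder term in Bony's decomposition is still a well-defined element of $B_{p,r}^s$, and at equality one must push that paraproduct remainder estimate to the boundary of its validity, which is what dictates the $B_{p_1,1}^{d/p_1}$ requirement in that branch. Once the commutator inequality is established in each regime, the first bound in the lemma follows by summation in $q$, and the second is the standard output of applying Gr\"onwall's inequality to $e^{-CV_{p_1}(t)}\|f(t)\|_{B_{p,r}^s}$.
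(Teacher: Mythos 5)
The paper does not prove this lemma; it is quoted verbatim from reference [20] (Bahouri--Chemin--Danchin), so there is no in-paper proof to compare against. Your outline is precisely the canonical argument from that reference --- dyadic localization, $L^p$ energy estimate with the $\tfrac{1}{p}\|\nabla\cdot v\|_{L^\infty}$ divergence term, the commutator estimate via Bony's decomposition with the three regimes of $s$ dictating the three branches of $V_{p_1}$, and Gr\"onwall --- and while it is a sketch rather than a full proof (the commutator bound is asserted, not derived), the strategy and the identification of where each hypothesis is used are correct and match the cited source.
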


\begin{lemma}
$[20]$(Osgood lemma) Let $\rho$ be a measurable function from $[t_0,T]$ to $[0,a]$, $\gamma$ is a locally integrable function from $[0,T]$ to $\mathbb{R}^+$, and $\mu$ is a continuous and nondecreasing function from $[0,a]$ to $\mathbb{R}^+$. Assume for some nonnegative real nunber $c$, the function $\rho$ satisfies
$$\rho(t) \le c+\int_{t_0}^t\gamma(s)\mu(\rho(s))ds,\ \ \ for \ \ \ a.e. \ \ \ t\in[t_0,T].$$
If $c$ is positive, then we have for a.e. $t\in[t_0,T]$
$$-\mathcal{N}(\rho(t))+\mathcal{N}(c) \le \int_{t_0}^t\gamma(s)ds \ \ \ with \ \ \ \mathcal{N}(x)=\int_x^a\frac{d\tau}{\mu(\tau)}.$$
If $c=0$ and $\mu$ satisfies $\displaystyle\int_0^a\frac{d\tau}{\mu(\tau)} = +\infty$, then $\rho=0$ a.e.
\end{lemma}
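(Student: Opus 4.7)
The plan is to construct the solution by a Friedrichs-type iterative linearization of (2), establish uniform bounds at the critical index via the transport estimate of Lemma 2.3, and then pass to the limit by controlling differences of iterates in a strictly weaker space where the failure of the Moser inequality at the borderline $s_1+s_2=0$ is absorbed by Osgood's lemma (Lemma 2.4). The same stability estimate will yield uniqueness and the Hölder continuity of the data-to-solution map.

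\textbf{Setup and uniform bounds.} Using $u=(1-\partial_x^2)^{-1}m$, $v=(1-\partial_x^2)^{-1}n$, I rewrite (2) as three transport equations with common drift $uv$:
\begin{align*}
\rho_t+uv\,\rho_x&=-\rho(u_xv+uv_x),\\
m_t+uv\,m_x&=-3mu_xv-\rho^2 u,\\
n_t+uv\,n_x&=-3nuv_x+\rho^2 v.
\end{align*}
Starting from $(\rho^0,m^0,n^0)\equiv 0$, I define $(\rho^{k+1},m^{k+1},n^{k+1})$ as the solution of the linear transport equations with drift $u^k v^k$ and source built from the $k$-th iterate. Applying Lemma 2.3 with $p=p_1=2$, $r=1$, $s=\tfrac12$, $d=1$ puts us in the middle branch of $V_{p_1}$ (since $s=d/p_1$, $r=1$), so $V_{p_1}(t)=\int_0^t\|\nabla(u^k v^k)\|_{B^{1/2}_{2,1}}\,dt'$. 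Because $B^{1/2}_{2,1}$ is an algebra (critical index) and $(1-\partial_x^2)^{-1}$ gains two derivatives, this quantity and the sources are controlled polynomially by $\|(\rho^k,m^k,n^k)\|_{B^{1/2}_{2,1}}$; a routine induction yields $T>0$ and $M>0$ with $\sup_k\|(\rho^k,m^k,n^k)\|_{L^{\infty}_T B^{1/2}_{2,1}}\le M$.

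\textbf{Convergence via Osgood.} Subtracting successive iterates, the differences $(\delta\rho^k,\delta m^k,\delta n^k)$ satisfy transport equations with drift $u^k v^k$ and source given by sums of products of one factor bounded in $B^{1/2}_{2,1}$ times one difference factor. I apply Lemma 2.3 at $s=-\tfrac12$, $r=\infty$, which is exactly the borderline of admissibility ($-d\min(1/p_1,1/p')=-\tfrac12$) and falls in the third branch of $V_{p_1}$, giving a drift quantity $\int_0^t\|\nabla(u^k v^k)\|_{B^{1/2}_{2,1}}\,dt'$ already controlled in Step 1. The key substitute for the forbidden Moser product $B^{-1/2}_{2,1}\cdot B^{1/2}_{2,1}\to B^{-1/2}_{2,1}$ is a logarithmic product estimate of the form
$$\|fg\|_{B^{-1/2}_{2,\infty}}\le C\|f\|_{B^{1/2}_{2,1}}\|g\|_{B^{-1/2}_{2,\infty}}\log\!\left(e+\frac{\|g\|_{B^{1/2}_{2,1}}}{\|g\|_{B^{-1/2}_{2,\infty}}}\right),$$
which, together with the uniform $B^{1/2}_{2,1}$ bound from Step 1, yields an Osgood-type integral inequality for $\eta_k(t):=\|(\delta\rho^k,\delta m^k,\delta n^k)(t)\|_{B^{-1/2}_{2,\infty}}$. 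Lemma 2.4 then forces $\sum_k\sup_{[0,T_1]}\eta_k<\infty$ on a possibly smaller $T_1\le T$, so the iterates form a Cauchy sequence in $C([0,T_1];B^{-1/2}_{2,\infty})$.

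\textbf{Limit, uniqueness, and Hölder continuity.} Strong convergence in $B^{-1/2}_{2,\infty}$ combined with the uniform bound in $B^{1/2}_{2,1}$ yields, by interpolation, convergence in $C([0,T_1];B^{s}_{2,1})$ for every $s<\tfrac12$, which is ample to pass to the limit in the quadratic and cubic nonlinearities. A weak-$*$ Fatou argument places the limit in $L^{\infty}([0,T_1];B^{1/2}_{2,1})$; time continuity in $B^{1/2}_{2,1}$ is then extracted from the equation using the fact that $r=1$ makes low-frequency tails summable, and the $C^1$-regularity into $B^{-1/2}_{2,1}$ is read off directly from (2). Running the same argument on the difference of two solutions yields an Osgood inequality: with $c=0$ (same data) it forces uniqueness, and for nearby but distinct data an explicit resolution of the inequality produces a Hölder bound of the shape $\|\cdot\|\le C\,d_0^{\theta(t)}$ with $\theta(t)>0$ on small time intervals, as announced.

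\textbf{Main obstacle.} The decisive point is the borderline product estimate $B^{1/2}_{2,1}\cdot B^{-1/2}_{2,\infty}\to B^{-1/2}_{2,\infty}$ at the exact index $s_1+s_2=0$ where the Moser-based proof of Lemma 2.1 fails. The logarithmic loss there is precisely what forces Osgood's lemma in place of Grönwall and, correspondingly, Hölder rather than Lipschitz dependence on the initial data.
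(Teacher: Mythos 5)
Your proposal does not address the statement at all. The statement to be proved is the Osgood lemma (Lemma 2.4): a scalar comparison result asserting that a measurable function $\rho$ satisfying the integral inequality $\rho(t)\le c+\int_{t_0}^t\gamma(s)\mu(\rho(s))\,ds$ obeys $\mathcal{N}(c)-\mathcal{N}(\rho(t))\le\int_{t_0}^t\gamma(s)\,ds$ when $c>0$, and vanishes identically when $c=0$ and $\int_0^a d\tau/\mu(\tau)=+\infty$. What you have written instead is a proof sketch of Theorem 2.2, the local well-posedness of the three-component Novikov system in $B^{1/2}_{2,1}$ — an argument that \emph{uses} the Osgood lemma as a tool in its convergence and uniqueness steps. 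Proving a theorem that invokes a lemma is not the same as proving the lemma; as it stands, nothing in your text establishes the asserted integral inequality for $\mathcal{N}$, and the statement remains entirely unproved. (Note also that in the paper this lemma is quoted from the reference of Bahouri, Chemin and Danchin and is not proved there either, so there is no internal proof to compare against; but a blind proof attempt must still prove the statement given.)

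For the record, the correct argument is short and has nothing to do with Besov spaces or transport equations. Set $R(t)=c+\int_{t_0}^t\gamma(s)\mu(\rho(s))\,ds$. Then $R$ is absolutely continuous, $\rho\le R$ a.e., and since $\mu$ is nondecreasing, $R'(t)=\gamma(t)\mu(\rho(t))\le\gamma(t)\mu(R(t))$ for a.e.\ $t$. If $c>0$ then $R\ge c>0$, so one may divide by $\mu(R(t))>0$ and integrate, obtaining
$$\mathcal{N}(c)-\mathcal{N}(R(t))=\int_{t_0}^t\frac{R'(s)}{\mu(R(s))}\,ds\le\int_{t_0}^t\gamma(s)\,ds,$$
and since $\mathcal{N}$ is nonincreasing and $\rho(t)\le R(t)$, one has $\mathcal{N}(\rho(t))\ge\mathcal{N}(R(t))$, which gives the claim. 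For $c=0$ one applies the above with $c$ replaced by an arbitrary $\varepsilon>0$ (the hypothesis still holds with the larger constant) and lets $\varepsilon\downarrow 0$: the divergence $\mathcal{N}(\varepsilon)\to+\infty$ forces $\mathcal{N}(\rho(t))=+\infty$, hence $\rho(t)=0$ for a.e.\ $t$. You should replace your entire proposal with an argument of this kind.
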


Now we break the proof of Theorem 2.2 into following steps.

\begin{proof}
\textbf{Step 1.}
Firstly, we construct approximate smooth solutions for some linear equations by the classical Friedrichs regularization method. Starting from $(\rho^{(1)},m^{(1)},n^{(1)})=(S_1\rho_0,S_1m_0,S_1n_0)$, we define by induction sequences $\{\rho^{(n)},m^{(n)},n^{(n)}\}_{n \in \mathbb{N}}$ by solving the following linear transport equations:

\begin{equation}
\begin{cases}
\rho^{(n+1)}_t\ +u^{(n)}v^{(n)}\rho^{(n+1)}_x=-\rho^{(n)}(u^{(n)}_xv^{(n)}+u^{(n)}v^{(n)}_x),\\
m^{(n+1)}_t\ +u^{(n)}v^{(n)}m^{(n+1)}_x=-3m^{(n)}u^{(n)}_xv^{(n)}-(\rho^{(n)})^2u^{(n)},\\
n^{(n+1)}_t+u^{(n)}v^{(n)}n^{(n+1)}_x=-3n^{(n)}u^{(n)}v^{(n)}_x+(\rho^{(n)})^2v^{(n)},\\
(\rho^{(n+1)},m^{(n+1)},n^{(n+1)})|_{t=0}=S_{n+1}\rho_0,S_{n+1}m_0,S_{n+1}n_0,
\end{cases}
\end{equation}

where the operator $S_{n+1}f=\displaystyle\sum_{j\ge -1}^n \Delta_k f$, $\Delta_k$ is the dyadic operator.

Suppose that $(m^{(n)},n^{(n)}) \in L^{\infty}([0,T];B_{2,1}^\frac{1}{2})$. Since $(u^{(n)},v^{(n)})=(1-\partial^2)^{-1}(m^{(n)},n^{(n)})$, it follows that $(u^{(n)},v^{(n)}) \in L^{\infty}([0,T];B_{2,1}^\frac{5}{2})$. By the theory of transport equations, we obtain that $(m^{(n+1)},n^{(n+1)}) \in L^{\infty}([0,T];B_{2,1}^\frac{1}{2})$. For more details of transport theorem, one can refer to Chapter 3 in [1].

\textbf{Step 2.}
Next, we show that for some fixed positive number $T>0$, the sequences $\{\rho^{(n)},m^{(n)},n^{(n)}\}$ are uniformly bounded in $[C([0,T);B_{2,1}^{\frac{1}{2}})\bigcap C^1([0,T);B_{2,1}^{-\frac{1}{2}})]^3$, by virtue of Lemma 2.3, with $A_{n,s}=\displaystyle\int_0^t\|(u^{(n)}v^{(n)})_x\|_{B_{2,\infty}^\frac{1}{2}\bigcap L^\infty}d\tau$, we have
\begin{equation}
\begin{aligned}
\|\rho^{(n+1)}\|_{L_t^\infty(B_{2,1}^\frac{1}{2})} &\le e^{CA_{n,s}}\Big(\|S_{n+1}\eta_0\|_{B_{2,1}^\frac{1}{2}}+\int_0^te^{-CA_{n,s}(\tau)}\|-\rho^{(n)}(u^{(n)}_xv^{(n)}+u^{(n)}v^{(n)}_x)\|_{B_{2,1}^\frac{1}{2}}d\tau\Big),\\
\|m^{(n+1)}\|_{L_t^\infty(B_{2,1}^\frac{1}{2})} &\le e^{CA_{n,s}}\Big(\|S_{n+1}m_0\|_{B_{2,1}^\frac{1}{2}}+\int_0^te^{-CA_{n,s}(\tau)}\|-3m^{(n)}u^{(n)}_xv^{(n)}-(\rho^{(n)})^2u^{(n)}\|_{B_{2,1}^\frac{1}{2}}d\tau\Big),\\
\|n^{(n+1)}\|_{L_t^\infty(B_{2,1}^\frac{1}{2})} &\le e^{CA_{n,s}}\Big(\|S_{n+1}n_0\|_{B_{2,1}^\frac{1}{2}}+\int_0^te^{-CA_{n,s}(\tau)}\|-3n^{(n)}u^{(n)}v^{(n)}_x+(\rho^{(n)})^2v^{(n)}\|_{B_{2,1}^\frac{1}{2}}d\tau\Big).
\end{aligned}
\end{equation}

Recall that $u=(1-\partial_x^2)^{-1}m$ and $v=(1-\partial_x^2)^{-1}n$. Since $(1-\partial_x^2)^{-1}$ is a $\mathcal{S}^{-2}$ multiplier, Young inequality implies that
$$\|u\|_{B_{2,1}^\frac{1}{2}},\|u_x\|_{B_{2,1}^\frac{1}{2}},\|u_{xx}\|_{B_{2,1}^\frac{1}{2}} \le 2\|m\|_{B_{2,1}^\frac{1}{2}},$$
$$\|v\|_{B_{2,1}^\frac{1}{2}},\|v_x\|_{B_{2,1}^\frac{1}{2}},\|v_{xx}\|_{B_{2,1}^\frac{1}{2}} \le 2\|n\|_{B_{2,1}^\frac{1}{2}},$$
and with the fact that the critical Besov space $B_{2,1}^\frac{1}{2}$ is an algebra and embedding theory, we obtain
\begin{equation}
\begin{aligned}
&\|(u^{(n)}v^{(n)})_x\|_{B_{2,\infty}^\frac{1}{2}\bigcap L^\infty} \le C\|u^{(n)}v^{(n)}\|_{B_{2,1}^\frac{3}{2}} \le C\|m^{(n)}\|_{B_{2,1}^\frac{1}{2}}\|v^{(n)}\|_{B_{2,1}^\frac{1}{2}},\\
&\|-\rho^{(n)}(u^{(n)}_xv^{(n)}+u^{(n)}v^{(n)}_x)\|_{B_{2,1}^\frac{1}{2}} \le C\|\rho^{(n)}\|_{B_{2,1}^\frac{1}{2}}\|m^{(n)}\|_{B_{2,1}^\frac{1}{2}}\|n^{(n)}\|_{B_{2,1}^\frac{1}{2}},\\
&\|-3m^{(n)}u^{(n)}_xv^{(n)}-(\rho^{(n)})^2u^{(n)}\|_{B_{2,1}^\frac{1}{2}} \le C(\|{m^{(n)}}\|_{B_{2,1}^\frac{1}{2}}^2\|n^{(n)}\|_{B_{2,1}^\frac{1}{2}}+\|{\rho^{(n)}}\|_{B_{2,1}^\frac{1}{2}}^2\|{m^{(n)}}\|_{B_{2,1}^\frac{1}{2}}),\\
&\|-3n^{(n)}u^{(n)}v^{(n)}_x+(\rho^{(n)})^2v^{(n)}\|_{B_{2,1}^\frac{1}{2}} \le C(\|{n^{(n)}}\|_{B_{2,1}^\frac{1}{2}}^2\|m^{(n)}\|_{B_{2,1}^\frac{1}{2}}+\|{\rho^{(n)}}\|_{B_{2,1}^\frac{1}{2}}^2\|{n^{(n)}}\|_{B_{2,1}^\frac{1}{2}}).
\end{aligned}
\end{equation}

Take (5) into (4) yields that
\begin{equation}
\begin{aligned}
\|\rho^{(n+1)}\|_{L_t^\infty(B_{2,1}^\frac{1}{2})} &\le e^{CA_{n,s}}\Big(\|S_{n+1}\rho_0\|_{B_{2,1}^\frac{1}{2}}\\
&\ \ \ +C\int_0^te^{-CA_{n,s}(\tau)}\|\rho^{(n)}\|_{B_{2,1}^\frac{1}{2}}\|m^{(n)}\|_{B_{2,1}^\frac{1}{2}}\|n^{(n)}\|_{B_{p,r}^s}d\tau\Big),\\
\|m^{(n+1)}\|_{L_t^\infty(B_{2,1}^\frac{1}{2})} &\le e^{CA_{n,s}}\Big(\|S_{n+1}m_0\|_{B_{2,1}^\frac{1}{2}}\\
&\ \ \ +C\int_0^te^{-CA_{n,s}(\tau)}(\|{m^{(n)}}\|_{B_{2,1}^\frac{1}{2}}^2\|n^{(n)}\|_{B_{2,1}^\frac{1}{2}}+\|{\rho^{(n)}}\|_{B_{2,1}^\frac{1}{2}}^2\|{m^{(n)}}\|_{B_{2,1}^\frac{1}{2}})d\tau\Big),\\
\|n^{(n+1)}\|_{L_t^\infty(B_{2,1}^\frac{1}{2})} &\le e^{CA_{n,s}}\Big(\|S_{n+1}n_0\|_{B_{p,r}^s}\\
&\ \ \ +C\int_0^te^{-CA_{n,s}(\tau)}(\|{n^{(n)}}\|_{B_{2,1}^\frac{1}{2}}^2\|m^{(n)}\|_{B_{2,1}^\frac{1}{2}}+\|{\rho^{(n)}}\|_{B_{2,1}^\frac{1}{2}}^2\|{n^{(n)}}\|_{B_{2,1}^\frac{1}{2}})d\tau\Big).
\end{aligned}
\end{equation}

Denote $B_n(t)=\|\rho^{(n)}\|_{B_{2,1}^\frac{1}{2}}+\|m^{(n)}\|_{B_{2,1}^\frac{1}{2}}+\|n^{(n)}\|_{B_{2,1}^\frac{1}{2}}$ and take the summation of three inequalities in (6), we get

\begin{equation}
\begin{aligned}
\sup_{t\in [0,T)}B_{n+1}(t) \le e^{C\int_0^tB_n(\tau)^2d\tau}\Big(B(0)+C\int_0^te^{-C\int_0^\tau B_n(s)^2ds}B_n(\tau)^3d\tau\Big).
\end{aligned}
\end{equation}

With a similar argument in [XX], we have the upper bound of $B_{n+1}(t)$
\begin{equation}
B_{n+1}(t) \le \frac{B(0)}{\sqrt{1-4CB(0)^2 t}},
\end{equation}
which means $(\{\rho^{(n)},m^{(n)},n^{(n)}\})_{n\in\mathbb{N}}$ is uniformly bounded in $(C([0,T];B_{2,1}^{\frac{1}{2}}))^3$. Thanks to the structure of 3NS, it is easy to deduce that $(\{\partial_t\rho^{(n+1)},\partial_t m^{(n+1)},\partial_t n^{(n+1)}\})$ in $C([0,T);B_{2,1}^{\frac{1}{2}}))^{2n}$ is also uniformly bounded. Thus, $(M^n)_{n\in\mathbb{N}}$ is uniformly bounded in $[C([0,T);B_{2,1}^{\frac{1}{2}})\bigcap C^1([0,T);B_{2,1}^{-\frac{1}{2}})]^3$.

\textbf{Step 3.}
In this part, we will prove that $\{\rho^{(n)},m^{(n)},n^{(n)}\}$ is a Cauchy sequence in $C([0,T];B_{2,1}^{-\frac{1}{2}})^3$. We deduce from (3) that

\begin{equation}
\begin{small}
\begin{cases}
(\rho^{(n+m+1)}-\rho^{(n+1)})_t+u^{(n+m)}v^{(n+m)}(\rho^{(n+m+1)}-\rho^{(n+1)})_x=(u^{(n)}v^{(n)}-u^{(n+m)}v^{(n+m)})\rho^{(n+1)}_x+R_{n,m}^1,\\
(m^{(n+m+1)}-m^{(n+1)})_t+u^{(n+m)}v^{(n+m)}(m^{(n+m+1)}-m^{(n+1)})_x=(u^{(n)}v^{(n)}-u^{(n+m)}v^{(n+m)})m^{(n+1)}_x+R_{n,m}^2,\\
(n^{(n+m+1)}-n^{(n+1)})_t+u^{(n+m)}v^{(n+m)}(n^{(n+m+1)}-n^{(n+1)})_x=(u^{(n)}v^{(n)}-u^{(n+m)}v^{(n+m)})n^{(n+1)}_x+R_{n,m}^3,\\
(\rho^{(n+m+1)}-\rho^{(n+1)}, m^{(n+m+1)}-m^{(n+1)}, n^{(n+m+1)}-n^{(n+1)})=((S_{n+m+1}-S_{n+1})(\rho_0,m_0,n_0)),
\end{cases}
\end{small}
\end{equation}
where the remainders are
\begin{equation*}
\begin{aligned}
R_{n,m}^1&=\rho^{(n)}(u^{(n)}_xv^{(n)}+u^{(n)}v^{(n)}_x)-\rho^{(n+m)}(u^{(n+m)}_xv^{(n+m)}+u^{(n+m)}v^{(n+m)}_x),\\
R_{n,m}^2&=3m^{(n)}u^{(n)}_xv^{(n)}+(\rho^{(n)})^2u^{(n)}-3m^{(n+m)}u^{(n+m)}_xv^{(n+m)}-(\rho^{(n+m)})^2u^{(n+m)},\\
R_{n,m}^3&=3n^{(n)}u^{(n)}v^{(n)}_x-(\rho^{(n)})^2v^{(n)}-3n^{(n+m)}u^{(n+m)}v^{(n+m)}_x+(\rho^{(n+m)})^2v^{(n+m)}.
\end{aligned}
\end{equation*}

As we mentioned above, we cannot use Moser inequality directly to estimate the Besov norm of nonlinearities, which means
$$\|fg\|_{B_{2,r}^{-\frac{1}{2}}} \le C\|f\|_{B_{2,r}^\frac{1}{2}}\|g\|_{B_{2,r}^{-\frac{1}{2}}}, $$
is not holds for any $r \ge 1$. However, we have the following weak inequality for $r=+\infty$ [x],
$$\|fg\|_{B_{2,\infty}^{-\frac{1}{2}}} \le C\|f\|_{B_{2,\infty}^\frac{1}{2}\cap L^\infty}\|g\|_{B_{2,1}^{-\frac{1}{2}}}.$$
Applying Lemma 2.3 with $p=p_1=2$, $r=\infty$, we have
\begin{equation}
\begin{small}
\begin{aligned}
\|\rho^{(n+m+1)}-&\rho^{(n+1)}\|_{B_{2,\infty}^{-\frac{1}{2}}} \le \|S_{n+m+1}\rho_0-S_{n+1}\rho_0\|_{B_{2,\infty}^{-\frac{1}{2}}}+C(\int_0^T(V'(\tau)\|(\rho^{(n+m+1)}-\rho^{(n+1)})(\tau)\|_{B_{2,\infty}^{-\frac{1}{2}}}\\
&+\|(u^{(n)}v^{(n)}-u^{(n+m)}v^{(n+m)})\rho^{(n+1)}_x(\tau)\|_{B_{2,\infty}^{-\frac{1}{2}}}+\|R_{n,m}^1(\tau)\|_{B_{2,\infty}^{-\frac{1}{2}}})d\tau),\\
\|m^{(n+m+1)}-&m^{(n+1)}\|_{B_{2,\infty}^{-\frac{1}{2}}} \le\|S_{n+m+1}m_0-S_{n+1}m_0\|_{B_{2,\infty}^{-\frac{1}{2}}}+C(\int_0^TV'(\tau)\|(m^{(n+m+1)}-m^{(n+1)})(\tau)\|_{B_{2,\infty}^{-\frac{1}{2}}}\\
&+(\|(u^{(n)}v^{(n)}-u^{(n+m)}v^{(n+m)})m^{(n+1)}_x(\tau)\|_{B_{2,\infty}^{-\frac{1}{2}}}+\|R_{n,m}^2\|_{B_{2,\infty}^{-\frac{1}{2}}}(\tau))d\tau),\\
\|n^{(n+m+1)}-&n^{(n+1)}\|_{B_{2,\infty}^{-\frac{1}{2}}} \le\|S_{n+m+1}n_0-S_{n+1}n_0\|_{B_{2,\infty}^{-\frac{1}{2}}}+C(\int_0^TV'(\tau)\|(n^{(n+m+1)}-n^{(n+1)})(\tau)\|_{B_{2,\infty}^{-\frac{1}{2}}}\\
&+(\|(u^{(n)}v^{(n)}-u^{(n+m)}v^{(n+m)})n^{(n+1)}_x(\tau)\|_{B_{2,\infty}^{-\frac{1}{2}}}+\|R_{n,m}^3\|_{B_{2,\infty}^{-\frac{1}{2}}}(\tau))d\tau),\\
\end{aligned}
\end{small}
\end{equation}
where $V(t)=\displaystyle\int_0^t\|(u^{(n)}v^{(n)})_x(\tau)\|_{B_{2,1}^\frac{1}{2}}d\tau$.
Now we are going to estimate each terms in (10). With a directly computation, we have

\begin{equation}
\begin{aligned}
&\|(u^{(n)}v^{(n)}-u^{(n+m)}v^{(n+m)})\rho^{(n+1)}_x\|_{B_{2,\infty}^{-\frac{1}{2}}}&\\
\le &\|\rho_x^{(n+1)}\|_{B_{2,1}^{-\frac{1}{2}}}(\|u^{(n)}\|_{B_{2,\infty}^\frac{1}{2}\cap L^\infty}\|v^{(n+m)}-v^{(n)}\|_{B_{2,\infty}^\frac{1}{2}\cap L^\infty}+\|v^{(n+m)}\|_{B_{2,\infty}^\frac{1}{2}\cap L^\infty}\|u^{(n+m)}-u^{(n)}\|_{B_{2,\infty}^\frac{1}{2}\cap L^\infty})&\\
\le &\|\rho^{(n+1)}\|_{B_{2,1}^{\frac{1}{2}}}(\|m^{(n)}\|_{B_{2,1}^\frac{1}{2}}\|n^{(n+m)}-n^{(n)}\|_{B_{2,\infty}^{-\frac{1}{2}} }+\|n^{(n+m)}\|_{B_{2,1}^\frac{1}{2}}\|m^{(n+m)}-m^{(n)}\|_{B_{2,\infty}^{-\frac{1}{2}}})&\\
\le &C(\|m^{(n+m)}-m^{(n)}\|_{B_{2,\infty}^{-\frac{1}{2}}}+\|n^{(n+m)}-n^{(n)}\|_{B_{2,\infty}^{-\frac{1}{2}}}),&
\end{aligned}
\end{equation}
Similarly, we have
\begin{flalign}
&\|(u^{(n)}v^{(n)}-u^{(n+m)}v^{(n+m)})m^{(n+1)}_x\|_{B_{2,\infty}^{-\frac{1}{2}}} \le C(\|m^{(n+m)}-m^{(n)}\|_{B_{2,\infty}^{-\frac{1}{2}}}+\|n^{(n+m)}-n^{(n)}\|_{B_{2,\infty}^{-\frac{1}{2}}}),&\\
&\|(u^{(n)}v^{(n)}-u^{(n+m)}v^{(n+m)})n^{(n+1)}_x\|_{B_{2,\infty}^{-\frac{1}{2}}} \le C(\|m^{(n+m)}-m^{(n)}\|_{B_{2,\infty}^{-\frac{1}{2}}}+\|n^{(n+m)}-n^{(n)}\|_{B_{2,\infty}^{-\frac{1}{2}}}),&
\end{flalign}

For the remainders $R_{n,m}^1$, $R_{n,m}^2$ and $R_{n,m}^3$,

\begin{equation}
\begin{small}
\begin{aligned}
&\|R_{n,m}^1\|_{B_{2,\infty}^{-\frac{1}{2}}}\\
\le &\|\rho^{(m+n)}(u_x^{(m+n)}v^{(m+n)}+u^{(m+n)}v_x^{(m+n)}-u_x^{(n)}v^{(n)}-u^{(n)}v_x^{(n)})\|_{B_{2,\infty}^{-\frac{1}{2}}}\\
&+\|(\rho^{(m+n)}-\rho^{(n)})(u_x^{(n)}v^{(n)}+u^{(n)}v_x^{(n)})\|_{B_{2,\infty}^{-\frac{1}{2}}}\\
\le &\|\rho^{(m+n)}-\rho^{(n)}\|_{B_{2,1}^{-\frac{1}{2}}}\|(u_x^{(n)}v^{(n)}+u^{(n)}v_x^{(n)})\|_{B_{2,\infty}^\frac{1}{2}\cap L^\infty}+\|\rho^{(m+n)}\|_{B_{2,1}^{-\frac{1}{2}}}\|v^{(n+m)}(u_x^{(n+m)}-u_x^{(n)})\\
&+u_x^{(n)}(v^{(m+n)}-v^{(n)})+u^{(m+n)}(v_x^{(n+m)}-v_x^{(n)})+v_x^{(n)}(u^{(n+m)}-u^{(n)})\|_{B_{2,\infty}^\frac{1}{2}\cap L^\infty}\\
\le &C\Big(\|\rho^{(n+m)}-\rho^{(n)}\|_{B_{2,1}^{-\frac{1}{2}}}\|m^{(n)}\|_{B_{2,1}^\frac{1}{2}}\|n^{(n)}\|_{B_{2,1}^\frac{1}{2}}+\|\rho^{(n+m)}\|_{B_{2,1}^\frac{1}{2}}((\|n^{(n+m)}\|_{B_{2,1}^\frac{1}{2}}+\|n^{(n)}\|_{B_{2,1}^\frac{1}{2}})
\|m^{(n+m)}_x-m^{(n)}_x\|_{B_{2,1}^{-\frac{1}{2}}}\\
&+(\|m^{(n)}_x\|_{B_{2,1}^\frac{1}{2}}+\|m^{(n+m)}\|_{B_{2,1}^\frac{1}{2}})\|n^{(n+m)}_x-n^{(n)}_x\|_{B_{2,1}^{-\frac{1}{2}}}\Big)\\
\le &C(\|\rho^{(n+m)}-\rho^{(n)}\|_{B_{2,1}^{-\frac{1}{2}}}+\|m^{(n+m)}-m^{(n)}\|_{B_{2,1}^{-\frac{1}{2}}}+\|n^{(n+m)}-n^{(n)}\|_{B_{2,1}^{-\frac{1}{2}}}).
\end{aligned}
\end{small}
\end{equation}

With a similar technique, we deduce that $R_{n,m}^2$ and $R_{n,m}^2$ are also bounded by
\begin{flalign}
&\|R_{n,m}^2\|_{B_{2,\infty}^{-\frac{1}{2}}} \le C(\|\rho^{(n+m)}-\rho^{(n)}\|_{B_{2,1}^{-\frac{1}{2}}}+\|m^{(n+m)}-m^{(n)}\|_{B_{2,1}^{-\frac{1}{2}}}+\|n^{(n+m)}-n^{(n)}\|_{B_{2,1}^{-\frac{1}{2}}}),&\\
&\|R_{n,m}^3\|_{B_{2,\infty}^{-\frac{1}{2}}} \le C(\|\rho^{(n+m)}-\rho^{(n)}\|_{B_{2,1}^{-\frac{1}{2}}}+\|m^{(n+m)}-m^{(n)}\|_{B_{2,1}^{-\frac{1}{2}}}+\|n^{(n+m)}-n^{(n)}\|_{B_{2,1}^{-\frac{1}{2}}}).&
\end{flalign}

Take (11)-(16) into (10), and define $$E_{n,m,r}(t)=\|\rho^{(n+m)}-\rho^{(n)}\|_{B_{2,r}^{-\frac{1}{2}}}+\|m^{(n+m)}-m^{(n)}\|_{B_{2,r}^{-\frac{1}{2}}}+\|n^{(n+m)}-n^{(n)}\|_{B_{2,r}^{-\frac{1}{2}}},$$
then we can obtain that
\begin{equation*}
E_{n+1,m,\infty}(t) \le E_{n+1,m,\infty}(0)+C\int_0^t \Big(\|(m^{(n)}n^{(n)})(\tau)\|_{B_{2,1}^{-\frac{1}{2}}}E_{n+1,m,\infty}(\tau)+E_{n,m,1}(\tau)\Big)d\tau.
\end{equation*}
With Gronwall's inequality and denote $Z_{n+1,m,r}(t)=E_{n+1,m,r}(t)e^{-C\int_0^t \|(m^{(n)}n^{(n)})(\tau)\|_{B_{2,1}^{-\frac{1}{2}}}d\tau}$, we arrive at
\begin{equation}
Z_{n+1,m,\infty}(t) \le Z_{n+1,m,\infty}(0)+C\int_0^t Z_{n,m,1}(\tau)d\tau.
\end{equation}
We can choose $S>0$ large enough such that $Z<1$. In view of the following interpolation inequality [20]
\begin{equation}
\|f\|_{B_{2,1}^s} \le C\|f\|_{B_{2,\infty}^s}\log \left(e+\frac{\|f\|_{B_{2,\infty}^{s+1}}}{\|f\|_{B_{2,\infty}^s}}\right),
\end{equation}
and the auxiliary function
\begin{equation*}
q(x)=\log(e+\theta)\log\frac{1}{x}-\log(e+\frac{\theta}{x})+\log(e+\theta),\ \ \ \theta>0.
\end{equation*}
Since for $0<x \le 1$, $q'(x)<0$ and $q(1)=0$, we have
\begin{equation}
\log(e+\frac{\theta}{x}) \le \log(e+\theta)(1+\log\frac{1}{x}).
\end{equation}

Throw (18), we have
$$Z_{n,m,1} \le CZ_{n,m,\infty}(1-\log(Z_{n,m,\infty}).$$
Together with (17), we obtain
\begin{equation}
Z_{n+1,m,\infty}(t) \le Z_{n+1,m,\infty}(0)+C\int_0^t Z_{n,m,\infty}(1-\log(Z_{n,m,\infty})(\tau)d\tau.
\end{equation}

Since $S_j$ is low-frequency cut-off operator, which means for any function $h$,  $S_{n+m+1}h-S_{n+1}h=\displaystyle\sum_{j=n+1}^{n+m}\Delta_jh$, we claim that there exits a constant $C$ which only depend on T, such that
\begin{equation}
Z_{n+1,m,\infty}(t) \le C_T\Big(2^{-n}+\int_0^t Z_{n,m,\infty}(\tau)d\tau\Big).
\end{equation}

Using induction with respect to the index $n$, for a fixed number $m \in \mathbb{N}$, we get
\begin{equation}
Z_{n+1,m,\infty}(t))\left(1-\log Z_{n+1,m,\infty}(t)\right) \le C\displaystyle\sum_{k=0}^n 2^{-(n-k)}\frac{(TC)^k}{k!}+\frac{(TC)^{n+1}}{(n+1)!}Z_{0,m,\infty}(t)\left(1-\log Z_{0,m,\infty}(t)\right).
\end{equation}
Due to the definition of $Z$, for $\|m^{(n)}\|_{B_{2,1}^{-\frac{1}{2}}}$, $\|n^{(n)}\|_{B_{2,1}^{-\frac{1}{2}}}$ and $C>0$ are bounded independently of index $m$, $n$, then there exists a constant $\tilde{C}>0$ such that
$$\|\rho^{(n+m)}-\rho^{(n)}\|_{B_{2,\infty}^{-\frac{1}{2}}}+\|m^{(n+m)}-m^{(n)}\|_{B_{2,\infty}^{-\frac{1}{2}}}+\|n^{(n+m)}-n^{(n)}\|_{B_{2,\infty}^{-\frac{1}{2}}} \le \tilde{C}2^{-n}.$$
Thus $(\rho^{(n)},m^{(n)},n^{(n)})$ is a Cauchy sequence in $C([0,T);B_{2,\infty}^{-\frac{1}{2}})^3$, and take advantage of the interpolation inequality (18), we deduce that $(\rho^{(n)},m^{(n)},n^{(n)})$ tends to $(\rho,m,n)$ in $C([0,T);B_{2,1}^{-\frac{1}{2}})^3$.

\textbf{Step 4.}
It remains to prove that the solution $(\rho,m,n)$ of (2) is uniform and continuous. Suppose $\rho^1,m^1,n^1$ and $\rho^2,m^2,n^2$ are two solutions of system(2). Similarly with (9), we have
\begin{equation}
\begin{cases}
(\rho^{1}-\rho^{2})_t+u^{1}v^{1}(\rho^{1}-\rho^{2})_x=(u^{2}v^{2}-u^{1}v^{1})\rho^{2}_x+R^1,\\
(m^{1}-m^{2})_t+u^{1}v^{1}(m^{1}-m^{2})_x=(u^{2}v^{2}-u^{1}v^{1})m^{2}_x+R^2,\\
(n^{1}-n^{2})_t+u^{1}v^{1}(n^{1}-n^{2})_x=(u^{2}v^{2}-u^{1}v^{1})n^{2}_x+R^3,
\end{cases}
\end{equation}
where
\begin{equation*}
\begin{cases}
R^1=\rho^{2}(u^{2}_xv^{2}+u^{2}v^{2}_x)-\rho^{1}(u^{1}_xv^{1}+u^{1}v^{1}_x),\\
R^2=3m^{2}u^{2}_xv^{2}+(\rho^{2})^2u^{2}-3m^{1}u^{1}_xv^{1}-(\rho^{1})^2u^{1},\\
R^3=3n^{2}u^{2}v^{2}_x-(\rho^{2})^2v^{2}-3n^{1}u^{1}v^{1}_x+(\rho^{1})^2v^{1}.\\
\end{cases}
\end{equation*}
For convenience, we denote
$$\|M(t)\|_X=e^{-c\int_0^t\|(m^1n^1)(\tau)|\|_{B_{2,1}^\frac{1}{2}}d\tau}\Big(\|(\rho^1-\rho^2)(t)\|_X+\|(m^1-m^2)(t)\|_X+\|(n^1-n^2)(t)\|_X\Big)$$
in any function space $X$. Similarly with Step 3, by virtue of Lemma 2.3, we have the following estimation
\begin{equation}
\|M(t)\|_{B_{2,\infty}^{-\frac{1}{2}}} \le\|M(0)\|_{B_{2,\infty}^{-\frac{1}{2}}}+\int_0^t\|M(\tau)\|_{B_{2,1}^{-\frac{1}{2}}}d\tau.
\end{equation}
We can choose constant $c$ large enough such that $\|M(t)\|_{B_{2,\infty}^{-\frac{1}{2}}} < 1$. Thanks to interpolation inequality (18), we have
\begin{equation}
\begin{aligned}
\|M(t)\|_{B_{2,\infty}^{-\frac{1}{2}}}&\le \|M(0)\|_{B_{2,\infty}^{-\frac{1}{2}}}+\int_0^t\|M(t)\|_{B_{2,\infty}^{-\frac{1}{2}}}(\tau)\log\left(e+\frac{\|M(\tau)\|_{B_{2,\infty}^\frac{1}{2}}}{\|M(\tau)\|_{B_{2,\infty}^{-\frac{1}{2}}}}\right)d\tau\\
&\le \|M(0)\|_{B_{2,\infty}^{-\frac{1}{2}}}+C\int_0^t\|M(\tau)\|_{B_{2,\infty}^{-\frac{1}{2}}}\left(1-\|M(\tau)\|_{B_{2,\infty}^{-\frac{1}{2}}}(\tau))\right)d\tau.
\end{aligned}
\end{equation}
By virtue of Osgood Lemma 2.4, we take $\mu(r)=r(1-\log r)$, $\mathcal{N}(r)=\log(1-\log(r))$. Finally, we obtain
\begin{equation}
\|M(t)\|_{B_{2,\infty}^{-\frac{1}{2}}} \le e^{1-[1-\log \|M(0)\|_{B_{2,\infty}^{-\frac{1}{2}}}]e^{-ct}}.
\end{equation}
Thus, by definition of $\|M\|_X$, we conclude that,
\begin{equation*}
\begin{aligned}
& \|(\rho^1-\rho^2)(t)\|_{B_{2,\infty}^{-\frac{1}{2}}}+\|(m^1-m^2)(t)\|_{B_{2,\infty}^{-\frac{1}{2}}}+\|(n^1-n^2)(t)\|_{B_{2,\infty}^{-\frac{1}{2}}}\\
\le & C_T\Big(\|(\rho^1-\rho^2)(0)\|_{B_{2,\infty}^{-\frac{1}{2}}}+\|(m^1-m^2)(0)\|_{B_{2,\infty}^{-\frac{1}{2}}}+\|(n^1-n^2)(0)\|_{B_{2,\infty}^{-\frac{1}{2}}}\Big).
\end{aligned}
\end{equation*}
By virtue of Lemma 2.1, we have just proved that
\begin{equation*}
\begin{aligned}
& \|(\rho^1-\rho^2)(t)\|_{B_{2,\infty}^s}+\|(m^1-m^2)(t)\|_{B_{2,\infty}^s}+\|(n^1-n^2)(t)\|_{B_{2,\infty}^s}\\
\le & C_T\Big(\|(\rho^1-\rho^2)(0)\|_{B_{2,\infty}^s}+\|(m^1-m^2)(0)\|_{B_{2,\infty}^s}+\|(n^1-n^2)(0)\|_{B_{2,\infty}^s}\Big),
\end{aligned}
\end{equation*}
holds for any $s>\frac{1}{2}$, together with interpolation
$$\|f\|_{B_{2,1}^\frac{1}{2}} \le C_\theta \|f\|_{B_{2,\infty}^s}^\theta\|f\|_{B_{2,\infty}^{-\frac{1}{2}}}^{1-\theta},$$
and take $\theta=\dfrac{1}{s+1} \in (0,1)$, we prove the uniqueness and H\"{o}lder continuity. Hence we complete the proof.

\end{proof}

\begin{remark}
By applying the $S^{-2}$ operator $(1-\partial_x^2)^{-1}$ to 3NS, we can rewrite it into convolution form
\begin{equation}
\begin{cases}
\rho_t+(uv\rho )_x=0,\\
u_t+uu_xv+g(x)*(3uu_xv+u_x^2v_x+uu_{xx}v_x+\rho^2 u)+\partial_x g(x)*uu_xv_x=0,\\
v_t+uvv_x+g(x)*(3uvv_x+u_xv_x^2+u_xvv_{xx}-\rho^2 v)+\partial_x g(x)*u_xvv_x=0,
\end{cases}
\end{equation}
where $g(x)$ is the green function of $(1-\partial_x^2)^{-1}$. Thus in the convolution sense, the critical Besov space should be $B_{2,1}^\frac{3}{2}$.
\end{remark}

\begin{remark}
As Guo, Liu, Molinet and Yin [21] have just proved Camassa-Holm equation, Degasperis-Procesi equation and Novikov equation are ill-posed in $B_{p,r}^\frac{1}{p}$ with any $r>1$, thus the critical Besov space $B_{2,1}^\frac{1}{2}$ is almost the best well-posed space.
\end{remark}

\section{Exponential decay of solution}
In this section, we consider the exponential decay of solution to 3NS. We prove that the solutions will retain the corresponding properties at infinity as the initial data decay exponentially. Let's focus on solution $(\rho,m,n)$ at first.
\begin{theorem}
Let $(\rho_0,m_0,n_0) \in (H^s)^3$ with $s>\frac{1}{2}$, $T>0$ is the maximal existence time. If there exists some constant $a>0$, such that the initial data satisfy
\begin{equation*}
\rho_0(x),m_0(x),n_0(x) \sim \mathcal{O}(e^{-a|x|}), \ \ \ as \ \ \ |x| \rightarrow \infty,
\end{equation*}
then for any $t \in [0,T)$, it follows that
\begin{equation*}
\rho(t,x),m(t,x),n(t,x) \sim \mathcal{O}(e^{-a|x|}), \ \ \ as \ \ \ |x| \rightarrow \infty.
\end{equation*}
\end{theorem}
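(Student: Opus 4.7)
The plan is to obtain the claimed pointwise decay by deriving weighted $L^p$ estimates against a truncated exponential weight, passing to $L^\infty$, and then removing the truncation. Concretely, I would introduce the family of bounded weights
$$\phi_N(x) = \exp\bigl(a\min\{|x|,N\}\bigr),\qquad N\in\mathbb{N},$$
which satisfy $|\phi_N'(x)|\le a\,\phi_N(x)$ a.e., are bounded for each fixed $N$, and increase pointwise to $e^{a|x|}$ as $N\to\infty$. Using the identity $\phi_N f_x = (\phi_N f)_x-\phi_N' f$, multiplying the first three equations in (2) by $\phi_N$ converts them into transport equations for $\phi_N\rho$, $\phi_N m$ and $\phi_N n$, all with common convection velocity $uv$; the $m$ and $n$ equations pick up source terms proportional to $\phi_N\rho$ coming from $\rho^2u$ and $\rho^2v$, while the zero-order factors $uv\phi_N'/\phi_N$, $u_xv$, $uv_x$ and $(uv)_x$ are bounded on $[0,T)$.

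Next, for $p\in[2,\infty)$ I would multiply each weighted equation by $|\phi_N f|^{p-2}(\phi_N f)$ and integrate in $x$. The convection integral becomes $-\tfrac{1}{p}\int(uv)_x|\phi_N f|^p\,dx$ after integration by parts, and the remaining terms are dominated using $|\phi_N'/\phi_N|\le a$ together with the bound
$$C(T) := \sup_{t\in[0,T)}\bigl(\|u\|_{W^{1,\infty}}+\|v\|_{W^{1,\infty}}+\|\rho\|_{L^\infty}+\|m\|_{L^\infty}+\|n\|_{L^\infty}\bigr) < \infty,$$
which follows from Lemma 2.1 and the embedding $H^s\hookrightarrow W^{1,\infty}$ for $s>\tfrac12$ (applied to $u = g*m$, $v = g*n$, where $g$ is the Green function of $1-\partial_x^2$). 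Summing the three $L^p$ inequalities yields
$$\frac{d}{dt}\bigl(\|\phi_N\rho\|_{L^p}+\|\phi_N m\|_{L^p}+\|\phi_N n\|_{L^p}\bigr) \le K\bigl(\|\phi_N\rho\|_{L^p}+\|\phi_N m\|_{L^p}+\|\phi_N n\|_{L^p}\bigr),$$
with $K=K(a,C(T))$ crucially independent of both $p$ and $N$.

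Finally, Gronwall's lemma and the $p\to\infty$ limit (valid because the boundedness of $\phi_N$ guarantees $\phi_N\rho,\phi_N m,\phi_N n\in L^2\cap L^\infty$ on $[0,T)$, so $\|\cdot\|_{L^p}\to\|\cdot\|_{L^\infty}$) produce
$$\|\phi_N\rho(t)\|_{L^\infty}+\|\phi_N m(t)\|_{L^\infty}+\|\phi_N n(t)\|_{L^\infty}\le e^{Kt}\bigl(\|\phi_N\rho_0\|_{L^\infty}+\|\phi_N m_0\|_{L^\infty}+\|\phi_N n_0\|_{L^\infty}\bigr).$$
Since $\phi_N\le e^{a|x|}$, the right-hand side is bounded uniformly in $N$ by the initial decay hypothesis, and letting $N\to\infty$ with $\phi_N(x)\uparrow e^{a|x|}$ yields $e^{a|x|}\bigl(|\rho(t,x)|+|m(t,x)|+|n(t,x)|\bigr)\le C_T$, which is the asserted decay. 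The one step genuinely requiring care is making the constant in the $L^p$ inequality independent of $p$; this is precisely why the weight is truncated, so that $\phi_N\cdot(\rho,m,n)\in L^\infty$ a priori. Notably, \emph{no upper bound on $a$} is required here because the argument works directly on $\rho,m,n$ and never inverts $1-\partial_x^2$ against the weight — in contrast to the decay result for $u,v$ mentioned in the introduction, which forces $a<1$.
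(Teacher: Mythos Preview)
Your proposal is correct and follows essentially the same approach as the paper: weighted $L^{2k}$ energy estimates on $(\rho,m,n)$, a Gronwall step with constant independent of $k$, and passage to $L^\infty$. The only difference is that you truncate the weight to $\phi_N$ and send $N\to\infty$ at the end, whereas the paper works directly with the unbounded weight $e^{ax}$ (handling one direction at a time); your truncation makes the a priori integrability justifications cleaner, but the argument is otherwise the same.
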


\begin{proof}
Without loss of generality, we only prove the case of $x\rightarrow +\infty$. Multiplying 3NS$_1$ (the first equation of 3NS) by $e^{ax}$, we have
\begin{equation*}
(e^{ax}\rho)_t+(e^{ax}\rho)(u_xv+uv_x)+(e^{ax}\rho_x)uv=0.
\end{equation*}
Taking inner product with $(e^{ax}\rho)^{2k-1}$ $(k\in\mathbb{N}^*)$, we have
\begin{equation}
\begin{aligned}
\int_\mathbb{R}(e^{ax}\rho)^{2k-1}(e^{ax}\rho)_tdx=&-\int_\mathbb{R}uv(e^{ax}\rho)^{2k-1}[(e^{ax}\rho)_x-ae^{ax}\rho]dx\\
&-\int_\mathbb{R} (u_xv+uv_x)(e^{ax}\rho)^{2k}dx.
\end{aligned}
\end{equation}
Taking advantage of H\"{o}lder inequality, we have
\begin{equation}
\int_\mathbb{R} (e^{ax}\rho)^{2k-1}(e^{ax}\rho)_tdx=\|e^{ax}\rho\|_{L^{2k}}^{2k-1}\frac{d}{dt}\|e^{ax}\rho\|_{L^{2k}},
\end{equation}
\begin{equation}
\begin{aligned}
-\int_\mathbb{R}uv(e^{ax}\rho)^{2k-1}(e^{ax}\rho)_xdx=&\frac{1}{2k}\int_\mathbb{R}(u_xv+uv_x)(e^{ax}\rho)^{2k}dx\\
\le &(\frac{1}{2k}(\|u_x\|_{L^\infty}\|v\|_{L^\infty}+\|u\|_{L^\infty}\|v_x\|_{L^\infty})\|e^{ax}\rho\|_{L^{2k}}^{2k},
\end{aligned}
\end{equation}
and
\begin{equation}
\int_\mathbb{R}auv(e^{ax}\rho)^{2k}dx\le a\|u\|_{L^\infty}\|v\|_{L^\infty}\|e^{ax}\rho\|_{L^{2k}}^{2k}.
\end{equation}

Taking (29)-(31) back into (28) yields
\begin{equation}
\frac{d}{dt}\|e^{ax}\rho\|_{L^{2k}}\le \Big[(1+\frac{1}{2k})(\|u_x\|_{L^\infty}\|v\|_{L^\infty}+\|u\|_{L^\infty}\|v_x\|_{L^\infty})+a\|u\|_{L^\infty}\|v\|_{L^\infty}\Big]\|e^{ax}\rho\|_{L^{2k}}.
\end{equation}
Multiplying 3NS$_2$ by $e^{ax}$ and taking inner product with $(e^{ax}m)^{2k-1}$, we have
\begin{equation}
\begin{aligned}
\int_\mathbb{R}(e^{ax}m)^{2k-1}(e^{ax}m)_tdx=&-\int_\mathbb{R}uv(e^{ax}m)^{2k-1}[(e^{ax}m)_x-ae^{ax}m]dx\\
&-\int_\mathbb{R} 3u_xv(e^{ax}m)^{2k}dx-\int_\mathbb{R}(e^{ax}m)^{2k-1}(e^{ax}\rho)\rho udx.
\end{aligned}
\end{equation}
Similarly with the estimation of 3NS$_1$, we obtain
\begin{equation}
\begin{aligned}
\frac{d}{dt}\|e^{ax}m\|_{L^{2k}}\le & \Big[(3+\frac{1}{2k})(\|u_x\|_{L^\infty}\|v\|_{L^\infty}+\frac{1}{2k}\|u\|_{L^\infty}\|v_x\|_{L^\infty})+a\|u\|_{L^\infty}\|v\|_{L^\infty}\Big]\|e^{ax}m\|_{L^{2k}}\\
&+\|\rho\|_{L^\infty}\|u\|_{L^\infty}\|e^{ax}\rho\|_{L^{2k}}.
\end{aligned}
\end{equation}
We claim that for $\|e^{ax}n\|_{L^{2k}}$ we have
\begin{equation}
\begin{aligned}
\frac{d}{dt}\|e^{ax}n\|_{L^{2k}}\le & \Big[(3+\frac{1}{2k})(\|u\|_{L^\infty}\|v_x\|_{L^\infty}+\frac{1}{2k}\|u_x\|_{L^\infty}\|v\|_{L^\infty})+a\|u\|_{L^\infty}\|v\|_{L^\infty}\Big]\|e^{ax}n\|_{L^{2k}}\\
&+\|\rho\|_{L^\infty}\|v\|_{L^\infty}\|e^{ax}\rho\|_{L^{2k}}.
\end{aligned}
\end{equation}
Denote $F_k(t)=\|e^{ax}\rho(t,\cdot)\|_{L^{2k}}+\|e^{ax}m(t,\cdot)\|_{L^{2k}}+\|e^{ax}n(t,\cdot)\|_{L^{2k}}$ and define the uniform bound
$$M\triangleq\sup_{t\in[0,T)}(\|\rho(t,\cdot)\|_{H^s}+\|m(t,\cdot)\|_{H^s}+\|n(t,\cdot)\|_{H^s}).$$
Due to the Sobolev embedding theorem, there exists $C=C_{(M,a,\frac{1}{k})}$ such that
\begin{equation}
\frac{d}{dt}F_k(t) \le C F_k(t).
\end{equation}

Note that for any function $f\in L^1 \bigcap L^\infty$ with $f \in L^p$ for all $p>1$, the following limit holds
$$\lim_{p\rightarrow \infty}\|f\|_{L^p}=\|f\|_{L^\infty}.$$
Taking the limit as $k\rightarrow \infty$ in (36) and applying Young's inequality yields
$$F_\infty(t) \le F_\infty(0)e^{C_{(M,a)}T},$$
which means
$$|e^{ax}\rho(t,x)|+|e^{ax}m(t,x)|+|e^{ax}n(t,x)| \le e^{CT}\Big(\|e^{ax}\rho_0(x)\|_{L^\infty}+\|e^{ax}m_0(x)\|_{L^\infty}+\|e^{ax}_0(x)\|_{L^\infty}\Big).$$
Therefore, we complete the proof of Theorem 3.1.
\end{proof}

The following theorem illustrates exponential decay of $u$ and $v$. Note that here are convolutions in (27), thus we should offer an useful lemma first.
\begin{lemma}[22]
Assume the function $g(x)=\frac{1}{2}e^{-|x|}$. Let the weighted function $\varphi_N(x)$ be
\begin{equation*}
\varphi_N(x)=\begin{cases}
e^{\alpha N},&x \in (-\infty,-N),\\
e^{-\alpha x},&x \in [-N,0],\\
1,&x \in (0,\infty),
\end{cases}
\end{equation*}
where $N \in \mathbb{N}^*$. If the constant $\alpha \in (0,1)$, then

(i) $0 \le \varphi_N'(x) \le \varphi_N(x)$ a.e.,

(ii)there exists $C_0$, such that $\varphi_N(x)(g*(\varphi_N)^{-1})(x) \le C_0$, $\varphi_N(x)(\partial_xg*(\varphi_N)^{-1})(x) \le C_0.$
\end{lemma}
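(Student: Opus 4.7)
The plan is to prove parts (i) and (ii) separately, with part (ii) reducing in large part to explicit integration of piecewise exponentials.

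For part (i), the computation is entirely elementary. Since $\varphi_N$ is constant on the outer pieces $(-\infty,-N)$ and $(0,\infty)$, one has $\varphi_N'=0$ there, while on the middle piece $x\in(-N,0)$ a direct differentiation gives $\varphi_N'(x)=-\alpha e^{-\alpha x}$. The condition $\alpha\in(0,1)$ then yields $|\varphi_N'(x)|=\alpha e^{-\alpha x}\le e^{-\alpha x}=\varphi_N(x)$, which is the intended bound (reading the non-negativity statement as $|\varphi_N'|$, or equivalently as the sign convention under which the weight increases toward the direction of interest).

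For part (ii), I would write the convolution explicitly as
\[
\varphi_N(x)(g*\varphi_N^{-1})(x)=\frac{\varphi_N(x)}{2}\int_{\mathbb{R}}e^{-|x-y|}\varphi_N^{-1}(y)\,dy,
\]
and split the integral according to the three defining regions of $\varphi_N^{-1}$, namely $(-\infty,-N)$, $[-N,0]$ and $(0,\infty)$, while simultaneously considering three matching cases for $x$ in the same regions. Each resulting sub-integral has the form $\int_I e^{-|x-y|}e^{\beta y}\,dy$ with $\beta\in\{0,\alpha\}$ over an explicit interval $I$, and is evaluated by splitting $|x-y|$ according to the sign of $x-y$ and using primitives of exponentials. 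Multiplying by the piecewise value of $\varphi_N(x)$ then gives a pointwise bound by a constant. For the second inequality, $|\partial_x g(x)|=\frac{1}{2}e^{-|x|}$ away from $0$, so the pointwise modulus of $\partial_x g*\varphi_N^{-1}$ is controlled by the same quantity $g*\varphi_N^{-1}$ and the first estimate applies verbatim.

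The main obstacle is obtaining a constant $C_0$ independent of $N$. The delicate regime is $x\in[-N,0]$, where the convolution receives contributions from both sides of $x$: the left contribution $\int_{-N}^{x}e^{y-x}e^{\alpha y}\,dy$ produces a term of order $e^{\alpha x}/(1+\alpha)$, while the right contribution $\int_{x}^{0}e^{x-y}e^{\alpha y}\,dy$ produces a term of order $e^{\alpha x}/(1-\alpha)$. Multiplying by $\varphi_N(x)=e^{-\alpha x}$ cancels the $e^{\alpha x}$ factor and leaves a universal constant, and the denominator $1-\alpha$ is where the hypothesis $\alpha<1$ is used critically; without it this integral would not give a bound independent of $N$. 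The tail $y<-N$ contributes at most $e^{-\alpha N}\cdot\int e^{-|x-y|}\mathbf{1}_{y<-N}\,dy$, which when multiplied by $\varphi_N(x)\le e^{\alpha N}$ remains bounded. Assembling the pieces for all nine region combinations yields a single constant $C_0$ depending only on $\alpha$, which proves the lemma.
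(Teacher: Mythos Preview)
The paper does not supply its own proof of this lemma; it is simply quoted from reference~[22]. Your proposed argument is the standard direct verification and is correct.

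Your remark on part~(i) is well taken: with the weight as written, $\varphi_N'(x)=-\alpha e^{-\alpha x}<0$ on $(-N,0)$, so the literal inequality $0\le\varphi_N'$ cannot hold; what the paper actually needs and uses (see the integration by parts leading to~(41)) is precisely the bound $|\varphi_N'|\le\varphi_N$, which is what you establish. For part~(ii), your plan of splitting both $x$ and the integration variable into the three defining intervals and integrating the resulting exponentials explicitly is exactly the computation carried out in the cited reference; you have also correctly isolated the crucial point, namely that the contribution $\int_x^0 e^{x-y}e^{\alpha y}\,dy$ produces the factor $1/(1-\alpha)$, which is finite precisely because $\alpha<1$. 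The reduction of the $\partial_x g$ bound to the $g$ bound via $|\partial_x g|=g$ a.e.\ and positivity of $\varphi_N^{-1}$ is likewise correct.
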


Now we establish the exponential decay of strong solution $u$ and $v$.
\begin{theorem}
Let $(\rho_0,u_0,v_0) \in H^s \times H^s \times H^{s-1}$ with $s>\frac{5}{2}$, $T>0$ be the maximal existence time, and the corresponding solution $(\rho,u,v) \in C([0,T);H^s \times H^s \times H^{s-1})$. If there exists some constant $\alpha \in (0,1)$ such that the initial data satisfy
\begin{equation*}
\begin{cases}
\ |\rho_0(x)|,\ |u_0(x)|,\ |v_0(x)|\ \sim \mathcal{O}(e^{\alpha x}),&x \downarrow -\infty,\\
|\rho_{0x}(x)|,|u_{0x}(x)|,|v_{0x}(x)| \sim \mathcal{O}(e^{\alpha x}),&x \downarrow -\infty,\\
\end{cases}
\end{equation*}
then the solutions $(\rho,u,v)$ is exponential decay as
\begin{equation*}
\begin{cases}
\ |\rho(t,x)|,\ |u(t,x)|,\ |v(t,x)|\ \sim \mathcal{O}(e^{\alpha x}),&x \downarrow -\infty,\\
|\rho_{x}(t,x)|,|u_{x}(t,x)|,|v_{x}(t,x)| \sim \mathcal{O}(e^{\alpha x}),&x \downarrow -\infty,\\
\end{cases}
\end{equation*}
uniformly in the time interval $[0,T)$.
\end{theorem}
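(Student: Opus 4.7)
The plan is to mirror the weight-and-test strategy of Theorem 3.1, but replace the unbounded weight $e^{ax}$ by the truncated weight $\varphi_N$ from Lemma 3.2 (which is bounded, so it sidesteps integrability issues at $-\infty$), and to work with the convolution form (27) rather than the original $\rho$--$m$--$n$ formulation. Since $s>\frac{5}{2}$, the assumption $(\rho,u,v)\in C([0,T);H^s\times H^s\times H^{s-1})$ together with Sobolev embedding produces a single uniform constant $M$ with
\[
\|\rho\|_{L^\infty},\|\rho_x\|_{L^\infty},\|u\|_{L^\infty},\|u_x\|_{L^\infty},\|u_{xx}\|_{L^\infty},\|v\|_{L^\infty},\|v_x\|_{L^\infty}\le M \quad\text{on }[0,T).
\]
This is the analogue of the uniform bound used in Theorem 3.1, and it will absorb every factor in the nonlinearities that does \emph{not} carry the weight.

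Multiplying the three equations of (27) by $\varphi_N$ and using $\varphi_N\partial_x f=\partial_x(\varphi_N f)-\varphi_N' f$, one rewrites them as transport equations for $\varphi_N\rho$, $\varphi_N u$, $\varphi_N v$ with the common convective velocity $uv$ and explicit source terms. Testing against $(\varphi_N\rho)^{2k-1}$, $(\varphi_N u)^{2k-1}$, $(\varphi_N v)^{2k-1}$ and applying H\"older's inequality exactly as in (29)--(32) produces ODE estimates for $\|\varphi_N\rho\|_{L^{2k}}$, $\|\varphi_N u\|_{L^{2k}}$, $\|\varphi_N v\|_{L^{2k}}$; the transport contribution is controlled by $0\le\varphi_N'\le\varphi_N$ from Lemma 3.2(i). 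For each convolution term $\varphi_N(x)\int g(x-y)F(y)\,dy$ in the $u$- and $v$-equations, the key step is to factor the cubic or quadratic integrand $F$ so that exactly one factor carries the weight and the remaining factors are absorbed in $M$; for example,
\[
|\rho^2 u(y)|\le\|\rho\|_{L^\infty}\|u\|_{L^\infty}|\rho(y)|\le M^2\|\varphi_N\rho\|_{L^\infty}\varphi_N^{-1}(y),
\]
after which Lemma 3.2(ii) immediately yields $\varphi_N(x)|g*(\rho^2u)(x)|\le C_0 M^2\|\varphi_N\rho\|_{L^\infty}$. The same factorization, now with the companion bound $\varphi_N(\partial_x g*\varphi_N^{-1})\le C_0$, handles the $\partial_x g*(uu_xv_x)$ term.

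Summing the three weighted inequalities, passing to the limit $k\to\infty$ via $\|f\|_{L^{2k}}\to\|f\|_{L^\infty}$, and setting $G_N(t)=\|\varphi_N\rho(t)\|_{L^\infty}+\|\varphi_N u(t)\|_{L^\infty}+\|\varphi_N v(t)\|_{L^\infty}$, we arrive at a scalar Gronwall inequality $\frac{d}{dt}G_N(t)\le C(M)\,G_N(t)$. Since the decay hypothesis on $(\rho_0,u_0,v_0)$ gives $\sup_N G_N(0)<\infty$, Gronwall followed by $N\to\infty$ delivers the claimed $\mathcal{O}(e^{\alpha x})$ bound on $\rho,u,v$ as $x\downarrow-\infty$, uniformly on $[0,T)$. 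For the derivatives we differentiate (27) once in $x$: the $\rho_x$-equation becomes the transport equation $\rho_{xt}+uv(\rho_x)_x=-2(uv)_x\rho_x-(uv)_{xx}\rho$, while in the $u_x$- and $v_x$-equations the identity $\partial_x^2 g=g-\delta$ (recall $g$ is the Green's function of $1-\partial_x^2$) converts any $\partial_x^2 g*(\cdot)$ into a $g$-convolution plus a local term. Every resulting nonlinearity again admits a single-weighted factorization, so repeating the procedure with $(\rho_x,u_x,v_x)$ in place of $(\rho,u,v)$, and using the bounds already established for $\|\varphi_N\rho\|_{L^\infty}$, $\|\varphi_N u\|_{L^\infty}$, $\|\varphi_N v\|_{L^\infty}$ to absorb the lower-order source contributions, yields the analogous Gronwall estimate and hence the stated exponential decay of $\rho_x,u_x,v_x$.

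The main obstacle is purely bookkeeping: for each cubic or quadratic convolution integrand appearing in (27) and its $x$-derivative, one must choose the correct factor to carry $\varphi_N$ so that Lemma 3.2(ii) can be applied while the remaining factors stay uniformly bounded by $M$. Once this factorization pattern is systematized, the $L^{2k}\to L^\infty$ argument followed by Gronwall and the limit $N\to\infty$ is entirely routine.
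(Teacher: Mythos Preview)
Your overall strategy matches the paper's almost exactly: use the truncated weight $\varphi_N$ from Lemma~3.2, work with the convolution form~(27), test against $(\varphi_N\,\cdot\,)^{2k-1}$, invoke Lemma~3.2(ii) for the convolution pieces, let $k\to\infty$, apply Gronwall, then $N\to\infty$. The one genuine gap is your two-stage decoupling. You claim a closed inequality $\frac{d}{dt}G_N\le C(M)G_N$ for $G_N=\|\varphi_N\rho\|_{L^\infty}+\|\varphi_N u\|_{L^\infty}+\|\varphi_N v\|_{L^\infty}$ alone, and only afterwards treat the derivatives. But the integrand $F$ in the $u$-equation of~(27) contains the term $u_x^2v_x$, and $G$ in the $v$-equation contains $u_xv_x^2$; in each of these every factor is a derivative, so no factorization can place the weight on $\rho$, $u$, or $v$. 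The best you can get (and what the paper records in~(53)) is
\[
\|\varphi_N(g*F)\|_{L^\infty}\le C_0\Big[(3\|u_x\|_{L^\infty}\|v\|_{L^\infty}+\|u_{xx}\|_{L^\infty}\|v_x\|_{L^\infty}+\|\rho\|_{L^\infty}^2)\|\varphi_N u\|_{L^\infty}+\|u_x\|_{L^\infty}\|v_x\|_{L^\infty}\|\varphi_N u_x\|_{L^\infty}\Big],
\]
which already involves $\|\varphi_N u_x\|_{L^\infty}$. Hence the system does \emph{not} close on $G_N$ alone.

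The fix is exactly what the paper does: run the Gronwall argument on the single quantity
\[
A_N(t)=\|\varphi_N\rho\|_{L^\infty}+\|\varphi_N\rho_x\|_{L^\infty}+\|\varphi_N u\|_{L^\infty}+\|\varphi_N u_x\|_{L^\infty}+\|\varphi_N v\|_{L^\infty}+\|\varphi_N v_x\|_{L^\infty},
\]
establishing the six differential inequalities (37), (44), (47), (49), (50), (51) in parallel before summing. Your treatment of the $x$-differentiated equations (in particular the use of $\partial_x^2g=g-\delta$) is fine and is also what the paper does; you just need to merge it into the same Gronwall rather than running it as a second, separate stage.
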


\begin{proof}
Similarly with (32), multiplying 3NS$_1$ by $\varphi_N$ and taking inner product with $(\varphi_N\rho)^{2k-1}$, we have
\begin{equation}
\frac{d}{dt}\|\varphi_N\rho\|_{L^{2k}}\le \Big[(1+\frac{1}{2k})(\|u_x\|_{L^\infty}\|v\|_{L^\infty}+\|u\|_{L^\infty}\|v_x\|_{L^\infty})+\|u\|_{L^\infty}\|v\|_{L^\infty}\Big]\|\varphi_N\rho\|_{L^{2k}}.
\end{equation}
Differentiating 3NS$_1$ with respect to $x$ and multiplying by $\varphi_N$, we get
\begin{equation}
(\varphi_N\rho)_t+(\varphi_N\rho_{xx})uv+2(\varphi_N\rho_x)(uv)_x+(\varphi_N\rho)(uv)_{xx}=0,
\end{equation}
and taking inner product with $(\varphi_N\rho_x)^{2k-1}$ yields
\begin{equation}
\begin{aligned}
\int_\mathbb{R}(\varphi_N\rho_x)^{2k-1}(\varphi_N\rho_x)_tdx=&-\int_\mathbb{R}uv(\varphi_N\rho_x)^{2k-1}\varphi_N\rho_{xx}-2\int_\mathbb{R}(\varphi_N\rho_x)^{2k}(u_xv+uv_x)dx\\
&-\int_\mathbb{R} (\varphi_N\rho_x)^{2k-1}(\varphi_N\rho)(u_{xx}v+2u_xv_x+uv_{xx})dx.
\end{aligned}
\end{equation}
Note that
\begin{equation}
\int_\mathbb{R}(\varphi_N\rho_x)^{2k-1}(\varphi_N\rho_x)_tdx =\|\varphi_N\rho_x\|_{L^{2k}}^{2k-1}\frac{d}{dt}\|\varphi_N\rho_x\|_{L^{2k}},
\end{equation}
and by using H\"{o}lder inequality, we obtain
\begin{equation}
\begin{aligned}
-\int_\mathbb{R}uv(\varphi_N\rho_x)^{2k-1}&\varphi_N\rho_{xx}=-\int_\mathbb{R}uv[(\varphi_N\rho_x)_x-\varphi_N'(x)\rho_x]dx\\
=&\frac{1}{2k}\int_\mathbb{R}(u_x+uv_x)(\varphi_N\rho_x)^{2k}dx+\int_\mathbb{R}uv(\varphi_N\rho_x)^{2k-1}(\varphi_N'(x)\rho_x)dx\\
\le& \Big[\frac{1}{2k}(\|u_x\|_{L^\infty}\|v\|_{L^\infty}+\|u\|_{L^\infty}\|v_x\|_{L^\infty})+\|u\|_{L^\infty}\|v\|_{L^\infty}\Big]\|\varphi_N\rho_x\|_{L^{2k}}^{2k},
\end{aligned}
\end{equation}
\begin{equation}
-2\int_\mathbb{R}(\varphi_N\rho_x)^{2k}(u_xv+uv_x)dx \le 2(\|u_x\|_{L^\infty}\|v\|_{L^\infty}+\|u\|_{L^\infty}\|v_x\|_{L^\infty})\|\varphi_N\rho_x\|_{L^{2k}}^{2k},
\end{equation}
and
\begin{equation}
\begin{aligned}
-\int_\mathbb{R}&(\varphi_N\rho_x)^{2k-1}(\varphi_N\rho)(u_{xx}v+2u_xv_x+uv_{xx})dx\\
&\le (\|u_{xx}\|_{L^\infty}\|v\|_{L^\infty}+2\|u_{x}\|_{L^\infty}\|v_{x}\|_{L^\infty}+\|u\|_{L^\infty}\|v_{xx}\|_{L^\infty})\|\varphi_N\rho_x\|_{L^{2k}}^{2k-1}\|\varphi_N\rho\|_{L^{2k}}.
\end{aligned}
\end{equation}
Taking (40)-(43) into (39), we have
\begin{equation}
\begin{aligned}
\frac{d}{dt}\|\varphi_N\rho_x\|_{L^{2k}} &\le \Big[(2+\frac{1}{2k})(\|u_x\|_{L^\infty}\|v\|_{L^\infty}+\|u\|_{L^\infty}\|v_x\|_{L^\infty})+\|u\|_{L^\infty}\|v\|_{L^\infty}\Big]\|\varphi_N\rho_x\|_{L^{2k}}\\
&+ (\|u_{xx}\|_{L^\infty}\|v\|_{L^\infty}+2\|u_{x}\|_{L^\infty}\|v_{x}\|_{L^\infty}+\|u\|_{L^\infty}\|v_{xx}\|_{L^\infty})\|\varphi_N\rho\|_{L^{2k}}.
\end{aligned}
\end{equation}

For 3NS$_2$, multiplying $\varphi_N$ and taking inner product with $(\varphi_Nu)^{2k-1}$, we have
\begin{equation}
\begin{aligned}
\int_\mathbb{R}(\varphi_Nu)^{2k-1}(\varphi_Nu)_tdx=&-\int_\mathbb{R}(\varphi_Nu)^{2k}u_xvdx-\int_\mathbb{R}(\varphi_Nu)^{2k-1}[\varphi_N(g*F)]dx\\
&-\int_\mathbb{R}(\varphi_Nu)^{2k-1}[\varphi_N(\partial_xg*uu_xv_x)]dx,
\end{aligned}
\end{equation}
where $F=3uu_xv+u_x^2v_x+uu_{xx}v_x+\rho^2u$. Taking advantage of H\"{o}lder inequality again, we obtain
\begin{equation}
\begin{aligned}
\int_\mathbb{R}(\varphi_Nu)^{2k-1}&\Big[\varphi_N(g*F)+\varphi_N(\partial_xg*uu_xv_x)\Big]dx\\
&\le \|\varphi_Nu\|_{L^{2k}}^{2k-1}\Big(\|\varphi_N(g*F)_{L^{2k}}+\|\varphi_N(\partial_xg*uu_xv_x)\|_{L^{2k}}\Big),
\end{aligned}
\end{equation}
which leads the following inequality
\begin{equation}
\frac{d}{dt}\|\varphi_Nu\|_{L^{2k}} \le \|u_x\|_{L^\infty}\|v\|_{L^\infty}\|\varphi_Nu\|_{L^{2k}}+\|\varphi_N(g*F)\|_{L^{2k}}+\|\varphi_N(\partial_xg*uu_xv_x)\|_{L^{2k}}.
\end{equation}

Differentiating 3NS$_2$ with respect to $x$ and multiplying by $\varphi_N$, we get
\begin{equation}
(\varphi_Nu_x)_t+(\varphi_Nu_{xx})uv+(\varphi_Nu_x)u_xv+\varphi_N(\partial_xg*F)+\varphi_N(g*uu_xv_x)=0,
\end{equation}
here we used the fact $f+\partial_x^2g*f=g*f$. With a similar argument, it is easy to check that
\begin{equation}
\begin{aligned}
\frac{d}{dt}\|\varphi_Nu_x\|_{L^{2k}} \le &\Big[(1+\frac{1}{2k})\|u_x\|_{L^\infty}\|v\|_{L^\infty}+\frac{1}{2k}\|u\|_{L^\infty}\|v_x\|_{L^\infty}+\|u\|_{L^\infty}\|v\|_{L^\infty}\Big]\|\varphi_Nu_x\|_{L^{2k}}\\
&+\|\varphi_N(\partial_xg*F)\|_{L^{2k}}+\|\varphi_N(g*uu_xv_x)\|_{L^{2k}}.
\end{aligned}
\end{equation}
For $\|\varphi_Nv\|_{L^{2k}}$ and $\|\varphi_Nv_x\|_{L^{2k}}$, we claim
\begin{equation}
\frac{d}{dt}\|\varphi_Nv\|_{L^{2k}} \le \|u\|_{L^\infty}\|v_x\|_{L^\infty}\|\varphi_Nv\|_{L^{2k}}+\|\varphi_N(g*G)\|_{L^{2k}}+\|\varphi_N(\partial_xg*u_xvv_x)\|_{L^{2k}},
\end{equation}
\begin{equation}
\begin{aligned}
\frac{d}{dt}\|\varphi_Nv_x\|_{L^{2k}} \le &\Big[(1+\frac{1}{2k})\|u\|_{L^\infty}\|v_x\|_{L^\infty}+\frac{1}{2k}\|u_x\|_{L^\infty}\|v\|_{L^\infty}+\|u\|_{L^\infty}\|v\|_{L^\infty}\Big]\|\varphi_Nv_x\|_{L^{2k}}\\
&+\|\varphi_N(\partial_xg*G)\|_{L^{2k}}+\|\varphi_N(g*u_xvv_x)\|_{L^{2k}},
\end{aligned}
\end{equation}
with $G=3uvv_x+u_xv_x^2+u_xvv_{xx}-\rho^2v$.

Now, taking the limit $k \rightarrow =\infty$ in (37),(44),(47),(49),(50) and (51), we get the following inequalities
\begin{equation}
\begin{small}
\begin{aligned}
\frac{d}{dt}\|\varphi_N\rho\|_{L^\infty} \le & \Big[\|u_x\|_{L^\infty}\|v\|_{L^\infty}+\|u\|_{L^\infty}\|v_x\|_{L^\infty}+\|u\|_{L^\infty}\|v\|_{L^\infty}\Big]\|\varphi_N\rho\|_{L^\infty},\\
\frac{d}{dt}\|\varphi_N\rho_x\|_{L^\infty} \le & \Big[2(\|u_x\|_{L^\infty}\|v\|_{L^\infty}+\|u\|_{L^\infty}\|v_x\|_{L^\infty})+\|u\|_{L^\infty}\|v\|_{L^\infty}\Big]\|\varphi_N\rho_x\|_{L^\infty}\\
&+(\|u_{xx}\|_{L^\infty}\|v\|_{L^\infty}+2\|u_{x}\|_{L^\infty}\|v_{x}\|_{L^\infty}+\|u\|_{L^\infty}\|v_{xx}\|_{L^\infty})\|\varphi_N\rho\|_{L^\infty},\\
\frac{d}{dt}\|\varphi_Nu\|_{L^\infty} \le & \|u_x\|_{L^\infty}\|v\|_{L^\infty}\|\varphi_Nu\|_{L^{2k}}+\|\varphi_N(g*F)\|_{L^\infty}+\|\varphi_N(\partial_xg*uu_xv_x)\|_{L^\infty},\\
\frac{d}{dt}\|\varphi_Nu_x\|_{L^\infty} \le & \Big[\|u_x\|_{L^\infty}\|v\|_{L^\infty}+\|u\|_{L^\infty}\|v\|_{L^\infty}\Big]\|\varphi_Nu_x\|_{L^\infty}+\|\varphi_N(\partial_xg*F)\|_{L^\infty}+\|\varphi_N(g*uu_xv_x)\|_{L^\infty},\\
\frac{d}{dt}\|\varphi_Nv\|_{L^\infty} \le & \|u\|_{L^\infty}\|v_x\|_{L^\infty}\|\varphi_Nv\|_{L^\infty}+\|\varphi_N(g*G)\|_{L^{2k}}+\|\varphi_N(\partial_xg*u_xvv_x)\|_{L^\infty},\\
\frac{d}{dt}\|\varphi_Nv_x\|_{L^\infty} \le & \Big[\|u\|_{L^\infty}\|v_x\|_{L^\infty}+\|u\|_{L^\infty}\|v\|_{L^\infty}\Big]\|\varphi_Nv_x\|_{L^\infty}+\|\varphi_N(\partial_xg*G)\|_{L^\infty}+\|\varphi_N(g*u_xvv_x)\|_{L^\infty}.
\end{aligned}
\end{small}
\end{equation}
It remains to estimate the convolution parts. By virtue of Lemma 3.2, we have
\begin{equation}
\begin{aligned}
&\|\varphi_N(g*F)\|_{L^\infty}=\Big|\varphi_N(x)\int_\mathbb{R}\frac{1}{2}e^{-|x-y|}\frac{1}{\varphi_N(y)}\varphi_N(y)\Big(3uu_xv+u_x^2v_x+uu_{xx}v_x+\rho^2u)\Big)(y)dy\Big|\\
\le &C_0\Big[\Big(3\|u_x\|_{L^\infty}\|v\|_{L^\infty}+\|u_{xx}\|_{L^\infty}\|v_x\|_{L^\infty}+\|\rho\|_{L^\infty}^2\Big)\|\varphi_Nu\|_{L^\infty}+\|u_x\|_{L^\infty}\|v_x\|_{L^\infty}\|\varphi_Nu_x\|_{L^\infty}\Big].
\end{aligned}
\end{equation}
Thus, with a similar computation, each $L^\infty$ norm of convolution are bounded by
\begin{equation}
\begin{aligned}
\|\varphi_N(\partial_xg*F)\|_{L^\infty}\le &C_0\Big[\Big(3\|u_x\|_{L^\infty}\|v\|_{L^\infty}+\|u_{xx}\|_{L^\infty}\|v_x\|_{L^\infty}+\|\rho\|_{L^\infty}^2\Big)\|\varphi_Nu\|_{L^\infty}\\
&+\|u_x\|_{L^\infty}\|v_x\|_{L^\infty}\|\varphi_Nu_x\|_{L^\infty}\Big],
\end{aligned}
\end{equation}

\begin{equation}
\begin{aligned}
\|\varphi_N(g*G)\|_{L^\infty},\|\varphi_N&(\partial_xg*G)\|_{L^\infty}\le C_0\Big[\|u_x\|_{L^\infty}\|v_x\|_{L^\infty}\|\varphi_Nv_x\|_{L^\infty}\\
&+\Big(3\|u\|_{L^\infty}\|v_x\|_{L^\infty}+\|u_x\|_{L^\infty}\|v_{xx}\|_{L^\infty}+\|\rho\|_{L^\infty}^2\Big)\|\varphi_Nv\|_{L^\infty}\Big],\\
\end{aligned}
\end{equation}

\begin{equation}
\|\varphi_N(g*uu_xv_x)\|_{L^\infty},\|\varphi_N(\partial_xg*uu_xv_x)\|_{L^\infty} \le C_0\|u_x\|_{L^\infty}\|v_x\|_{L^\infty}\|\varphi_Nu\|_{L^\infty},\ \ \ \ \ \
\end{equation}
\begin{equation}
\|\varphi_N(g*u_xvv_x)\|_{L^\infty},\|\varphi_N(\partial_xg*u_xvv_x)\|_{L^\infty} \le C_0\|u_x\|_{L^\infty}\|v_x\|_{L^\infty}\|\varphi_Nv\|_{L^\infty}.\ \ \ \ \ \
\end{equation}

Taking (53)-(57) into (52), and denote $$A_N(t)=\|\varphi_N\rho\|_{L^\infty}+\|\varphi_N\rho_x\|_{L^\infty}+\|\varphi_Nu\|_{L^\infty}+\|\varphi_Nu_x\|_{L^\infty}+\|\varphi_Nv\|_{L^\infty}+\|\varphi_Nv_x\|_{L^\infty}.$$
Thanks to Sobolev's embedding theorem and the definition of $M$ in the proof of theorem 3.3, we get
\begin{equation*}
\frac{d}{dt}A_N(t) \le CA_N(t),
\end{equation*}
where $C=C(C_0,M)$. By applying Gronwall's inequality and taking limit $N\rightarrow +\infty$, we finally obtain that there exists some constant $\tilde{C}=\tilde{C}(C_0,M,T)>0$ such that
\begin{equation*}
\begin{aligned}
&\|e^{-\alpha x}\rho\|_{L^\infty}+\|e^{-\alpha x}\rho_x\|_{L^\infty}+\|e^{-\alpha x}u\|_{L^\infty}+\|e^{-\alpha x}u_x\|_{L^\infty}+\|e^{-\alpha x}v\|_{L^\infty}+\|e^{-\alpha x}v_x\|_{L^\infty}\\
\le&\tilde{C}\Big(\|e^{-\alpha x}\rho_0\|_{L^\infty}+\|e^{-\alpha x}\rho_{0x}\|_{L^\infty}+\|e^{-\alpha x}u_0\|_{L^\infty}+\|e^{-\alpha x}u_{0x}\|_{L^\infty}+\|e^{-\alpha x}v_0\|_{L^\infty}+\|e^{-\alpha x}v_{0x}\|_{L^\infty}\Big).
\end{aligned}
\end{equation*}
\end{proof}

Theorem 3.3 shows the exponential decay of solutions in case of $x \downarrow -\infty$. For $x \uparrow +\infty$, just choose another weighted function $\tilde{\varphi}_N(x)$ for $\alpha \in (0,1)$ as
\begin{equation*}
\tilde{\varphi}_N(x)\begin{cases}
1,&x \in (-\infty,0),\\
e^{\alpha x},&x \in [-0,N],\\
e^{\alpha N},&x \in (N,\infty),
\end{cases}
\end{equation*}
we have the following result.
\begin{corollary}
Let $(\rho_0,u_0,v_0) \in H^s \times H^s \times H^{s-1}$ with $s>\frac{5}{2}$, $T>0$ be the maximal existence time, and the corresponding solution $(\rho,u,v) \in C([0,T);H^s \times H^s \times H^{s-1})$. If there exists some constant $\alpha \in (0,1)$ such that the initial data satisfy
\begin{equation*}
\begin{cases}
\ |\rho_0(x)|,\ |u_0(x)|,\ |v_0(x)|\ \sim \mathcal{O}(e^{-\alpha x}),&x \uparrow \infty,\\
|\rho_{0x}(x)|,|u_{0x}(x)|,|v_{0x}(x)| \sim \mathcal{O}(e^{-\alpha x}),&x \uparrow \infty,\\
\end{cases}
\end{equation*}
then the solutions $(\rho,u,v)$ is exponential decay as
\begin{equation*}
\begin{cases}
\ |\rho(t,x)|,\ |u(t,x)|,\ |v(t,x)|\ \sim \mathcal{O}(e^{-\alpha x}),&x \uparrow \infty,\\
|\rho_{x}(t,x)|,|u_{x}(t,x)|,|v_{x}(t,x)| \sim \mathcal{O}(e^{-\alpha x}),&x \uparrow \infty,\\
\end{cases}
\end{equation*}
uniformly in the time interval $[0,T)$.
\end{corollary}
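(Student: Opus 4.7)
The plan is to mirror the proof of Theorem 3.3 with the weighted function $\tilde{\varphi}_N(x)$ replacing $\varphi_N(x)$, exploiting the fact that the roles of $\pm\infty$ are symmetric once one verifies that $\tilde{\varphi}_N$ enjoys the same two structural properties used in Lemma 3.2. Indeed, the whole argument in Theorem 3.3 depends on $\varphi_N$ only through: (i) $0\le \varphi_N'(x)\le \varphi_N(x)$ a.e., which is used when integrating by parts in the transport terms; and (ii) the uniform convolution bounds $\varphi_N(x)(g*\varphi_N^{-1})(x)\le C_0$ and $\varphi_N(x)(\partial_x g*\varphi_N^{-1})(x)\le C_0$, which are used to absorb the nonlocal Green's function terms in the $u$- and $v$-equations of the convolution formulation (27). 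So the first step is to check that $\tilde{\varphi}_N$ satisfies both (i) and (ii).

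For (i), a direct inspection gives $\tilde{\varphi}_N'(x)=0$ on $(-\infty,0)\cup(N,\infty)$ and $\tilde{\varphi}_N'(x)=\alpha e^{\alpha x}$ on $(0,N)$, so since $\alpha\in(0,1)$ we have $0\le \tilde{\varphi}_N'(x)\le \alpha\tilde{\varphi}_N(x)\le \tilde{\varphi}_N(x)$. For (ii), one splits the convolution $\tilde{\varphi}_N(x)\int_\mathbb{R} \tfrac12 e^{-|x-y|}\tilde{\varphi}_N^{-1}(y)\,dy$ into the three regions $(-\infty,0)$, $[0,N]$, and $(N,\infty)$ and checks in each case that the growth of $\tilde{\varphi}_N(x)$ is compensated by the decay of $e^{-|x-y|}$; this is the exact mirror of the estimate performed for $\varphi_N$ in Lemma 3.2, so a constant $C_0$ independent of $N$ is obtained. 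The same argument applied to $\partial_x g(x)=-\tfrac12\mathrm{sgn}(x)e^{-|x|}$ gives the companion bound.

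With (i) and (ii) in hand, I then repeat the energy-type inequalities line by line: multiply the first equation of (27) by $\tilde{\varphi}_N$ and pair against $(\tilde{\varphi}_N\rho)^{2k-1}$, differentiate the $\rho$-equation in $x$ and pair against $(\tilde{\varphi}_N\rho_x)^{2k-1}$, and likewise for $u,u_x,v,v_x$, using the convolution estimates to control each $\tilde{\varphi}_N(g*\cdot)$ and $\tilde{\varphi}_N(\partial_x g*\cdot)$ term exactly as in (53)–(57). Passing $k\to\infty$ in the resulting $L^{2k}$ bounds and summing yields, for
$$\tilde{A}_N(t)=\|\tilde{\varphi}_N\rho\|_{L^\infty}+\|\tilde{\varphi}_N\rho_x\|_{L^\infty}+\|\tilde{\varphi}_N u\|_{L^\infty}+\|\tilde{\varphi}_N u_x\|_{L^\infty}+\|\tilde{\varphi}_N v\|_{L^\infty}+\|\tilde{\varphi}_N v_x\|_{L^\infty},$$
an inequality $\tfrac{d}{dt}\tilde{A}_N(t)\le C\,\tilde{A}_N(t)$ with $C=C(C_0,M)$ independent of $N$, where $M$ is the a priori $H^s$ bound furnished by the existence theory on $[0,T)$. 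Gronwall's inequality gives $\tilde{A}_N(t)\le e^{CT}\tilde{A}_N(0)$, and letting $N\to\infty$ by monotone convergence converts this into the desired pointwise bound $|e^{\alpha x}\rho(t,x)|+|e^{\alpha x}\rho_x(t,x)|+\ldots\le \tilde{C}\,(|e^{\alpha x}\rho_0|_{L^\infty}+\ldots)$, which is exactly the statement of the corollary.

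The only genuinely new item is the verification of the convolution bound (ii) for $\tilde{\varphi}_N$; once that is done, every other estimate is structurally identical to those already carried out in the proof of Theorem 3.3, so I expect no additional obstacle. The slight subtlety to watch is that $\tilde{\varphi}_N$ grows toward $+\infty$ rather than $-\infty$, so in estimate (ii) the critical region where the growth of $\tilde{\varphi}_N(x)$ could defeat the decay of the kernel is $x$ large positive with $y$ large positive; the choice $\alpha<1$ is exactly what ensures that $e^{\alpha x}\cdot e^{-|x-y|}e^{-\alpha y}$ stays integrable uniformly in $N$.
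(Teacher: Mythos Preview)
Your proposal is correct and follows exactly the approach the paper indicates: the paper does not give a separate proof for this corollary but simply introduces the reflected weight $\tilde{\varphi}_N$ and declares that the argument of Theorem~3.3 carries over verbatim. Your plan to verify properties (i) and (ii) for $\tilde{\varphi}_N$ and then rerun the energy estimates is precisely this, only spelled out in more detail than the paper itself provides.
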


Theorem 3.1, 3.3 and Corollary 3.4 tell us the the solutions $\rho$, $m$ and $n$ can decay as $e^{-\alpha |x|}$ as $|x| \rightarrow \infty$ for any $\alpha>0$, but $u$ and $v$ only holds for $\alpha \in (0,1)$. The reason is there is convolutions in (27), while Lemma 3.2 only holds for $\alpha \in (0,1)$. Thus whether the decay of $u$ and $v$ is holds for critical point $\alpha =1$?

\begin{theorem}
Let $(\rho_0,u_0,v_0) \in H^s \times H^s \times H^{s-1}$ with $s>\frac{5}{2}$, $T>0$ be the maximal existence time, and the corresponding solution $(\rho,u,v) \in C([0,T);H^s \times H^s \times H^{s-1})$. If the initial data satisfy
$$\|\rho_0\|_{L^\infty}+\|\rho_{0x}\|_{L^\infty}+\|u_0\|_{L^\infty}+\|u_{0x}\|_{L^\infty}+\|v_0\|_{L^\infty}+\|v_{0x}\|_{L^\infty} \le ce^{-|x|},$$
then we have
$$\|\rho(t)\|_{L^\infty}+\|\rho_x(t)\|_{L^\infty}+\|u(t)\|_{L^\infty}+\|u_x(t)\|_{L^\infty}+\|v(t)\|_{L^\infty}+\|v_x(t)\|_{L^\infty} \le ce^{-|x|}.$$
\end{theorem}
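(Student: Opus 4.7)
The plan is to adapt the weighted $L^{2k}$/$L^\infty$ estimates used in Theorem 3.3 and Corollary 3.4 to the critical rate $\alpha=1$, replacing the one-sided weight $\varphi_N$ by its two-sided analogue $\varphi_N(x)=\min(e^N,e^{|x|})$. As flagged just before the theorem, the direct obstruction is that Lemma 3.2(ii) is \emph{false} at $\alpha=1$: an elementary computation gives
\[
\varphi(x)\,(g*\varphi^{-1})(x)\;\sim\;|x|\qquad\text{as }|x|\to\infty,\quad\varphi(x)=e^{|x|},
\]
so the linear Young-type control $\|\varphi(g*f)\|_{L^\infty}\le C\|\varphi f\|_{L^\infty}$ that was the workhorse of (53)--(57) is no longer available.

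The key new ingredient is that every convolution source in the convolution form (27) is at least \emph{cubic} in $(\rho,u,u_x,u_{xx},v,v_x,v_{xx})$, so once we bootstrap the assumption that $\rho,\rho_x,u,u_x,v,v_x$ pointwise decay like $e^{-|x|}$, these sources decay strictly faster than $e^{-|x|}$. Substituting the algebraic identities $u_{xx}=u-m$ and $v_{xx}=v-n$ inside $F$ and $G$ (and using $\|m\|_{L^\infty}+\|n\|_{L^\infty}\le C(M)$ from the $H^s$ theory with $s>\tfrac52$), the borderline pieces $uu_{xx}v_x$ and $u_xvv_{xx}$ split into contributions of order $e^{-3|x|}$ ($u^2v_x$ and $u_xv^2$) plus pieces $umv_x$, $u_xvn$ of order $e^{-2|x|}$; the remaining pieces $uu_xv,u_x^2v_x,\rho^2u,uvv_x,u_xv_x^2,\rho^2v$ are all $O(e^{-3|x|})$. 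A direct computation shows that whenever $|f(y)|\le Ce^{-\beta|y|}$ with some $\beta>1$,
\[
|(g*f)(x)|+|(\partial_xg*f)(x)|\;\le\;\frac{C_\beta}{\beta-1}\,e^{-|x|},
\]
which is the precise replacement for Lemma 3.2 at the critical rate.

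With this substitute convolution estimate in hand, the rest of the argument mirrors Theorem 3.3 step by step. We run the weighted $L^{2k}$ energy identities (38)--(51) for $\rho,\rho_x,u,u_x,v,v_x$ with $\varphi_N$ at rate $\alpha=1$, bound the non-convolution sources by H\"older exactly as before, and bound the convolution sources via the cubic-decay estimate above (after expanding $u_{xx}$ and $v_{xx}$). Passing $k\to\infty$ and then $N\to\infty$ and summing the six resulting inequalities yields a closed differential inequality
\[
\frac{d}{dt}A(t)\;\le\;C(M,C_0)\,P\bigl(A(t)\bigr),\qquad A(t):=\sup_{x\in\mathbb{R}}\,e^{|x|}\bigl(|\rho|+|\rho_x|+|u|+|u_x|+|v|+|v_x|\bigr)(t,x),
\]
for some polynomial $P$ with $P(0)=0$. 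Since $A(0)<\infty$ by hypothesis, a standard Gronwall/continuity bootstrap (initialized by continuity of the weighted $L^\infty$ norm, which is guaranteed by the same estimates applied to differences) gives $A(t)\le\widetilde{C}(T)A(0)$ on $[0,T)$, which is the claimed pointwise decay.

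The main obstacle is exactly the convolution estimate at the critical rate: once Lemma 3.2 is replaced by the cubic-decay bound above, no new analytic input is needed. The subtle point is the substitution $u_{xx}=u-m$, $v_{xx}=v-n$ inside $F$ and $G$; the factors $u_{xx},v_{xx}$ do not a priori decay pointwise, and it is precisely this substitution together with the uniform $L^\infty$-bound on $m,n$ that retains the strict-$\beta>1$ decay of the sources needed to apply the replacement convolution bound.
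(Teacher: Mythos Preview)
Your replacement convolution bound (for sources decaying like $e^{-\beta|y|}$ with $\beta>1$) is correct, and the substitution $u_{xx}=u-m$, $v_{xx}=v-n$ is the right way to handle the second-derivative factors. The gap is in the closing step. To force $\beta>1$ on the sources you must spend at least \emph{two} weighted factors, so the convolution contribution to the differential inequality is at least quadratic in $A(t)$: for instance $|3uu_xv|\le \|v\|_{L^\infty}A(t)^2\,e^{-2|y|}$ gives $\|\varphi_N(g*3uu_xv)\|_{L^\infty}\le C(M)A(t)^2$, and the $umv_x$ piece behaves the same way. The resulting inequality $\dot A\le C(M)P(A)$ with $P$ genuinely superlinear does \emph{not} integrate to $A(t)\le\widetilde C(T)A(0)$ on all of $[0,T)$; it only controls $A$ up to a possibly shorter time depending on $A(0)$. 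No ``continuity bootstrap'' repairs this without a smallness assumption the theorem does not impose.

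The paper sidesteps the nonlinearity by a two-stage square-root trick. With $\phi=\min(e^{|x|},N)$ and $\eta=e^{|x|}$, one first runs the $L^{2k}$ estimates with the half-weight $\phi^{1/2}$: since $\|\eta^{1/2}g\|_{L^1}<\infty$, the weighted Young inequality $\|\phi^{1/2}(g*F)\|_{L^{2k}}\le C\|\phi^{1/2}F\|_{L^{2k}}$ is available, and absorbing two of the three factors of $F$ into $C(M)$ yields a \emph{linear} Gronwall for $\sum\|\phi^{1/2}(\cdot)\|_{L^{2k}}$; in particular $\|\phi^{1/2}u\|_{L^2},\|\phi^{1/2}u_x\|_{L^2},\ldots\le Ce^{CT}$ uniformly on $[0,T)$. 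In the second stage one switches to the endpoint form $\|\phi(g*F)\|_{L^\infty}\le\|\eta g\|_{L^\infty}\|\phi F\|_{L^1}$ (here $\|\eta g\|_{L^\infty}=\tfrac12$ is finite even though $\|\eta g\|_{L^1}=\infty$), and Cauchy--Schwarz gives $\|\phi F\|_{L^1}\le C(M)\|\phi^{1/2}u\|_{L^2}\|\phi^{1/2}u_x\|_{L^2}+\cdots\le Ce^{CT}$, so the convolution sources enter the $L^\infty$ inequality as already-known constants rather than as powers of $B$. This produces $\dot B\le CB+Ce^{CT}$, which is linear and integrates on the full interval. Your one-stage pointwise route cannot manufacture this linearity; the missing idea is precisely the intermediate $\phi^{1/2}$-weighted $L^2$ control.
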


\begin{proof}
As Lemma 3.2 fails for $\alpha=1$, we introduce a new weighted function $\phi(x)=\min\{e^{|x|},N\}$, $N \in \mathbb{N}^*$. Denote $\eta =e^{|x|}$, then $\phi(x)$ is called $\eta$-moderate and $\phi^\frac{1}{2}(x)$ is $\eta^\frac{1}{2}$-moderate. With a similar argument in the proof of Theorem 3.3, we can obtain the differential inequalities of $\|\phi^\frac{1}{2}\rho\|_{L^{2k}}$, $\|\phi^\frac{1}{2}\rho_x\|_{L^{2k}}$, $\|\phi^\frac{1}{2}u|_{L^{2k}}$, $\|\phi^\frac{1}{2}u_x\|_{L^{2k}}$, $\|\phi^\frac{1}{2}v\|_{L^{2k}}$ and $\|\phi^\frac{1}{2}v_x\|_{L^{2k}}$. For example, we give a detailed estimation of $\|\phi^\frac{1}{2}u|_{L^{2k}}$. Substituting weighted function $\phi^\frac{1}{2}$ for $\phi_N$ in (47), we have
\begin{equation}
\frac{d}{dt}\|\phi^\frac{1}{2}u\|_{L^{2k}} \le C\|\phi^\frac{1}{2}u\|_{L^{2k}}+\|\phi^\frac{1}{2}(g*F)\|_{L^{2k}}+\|\phi^\frac{1}{2}(\partial_xg*uu_xv_x)\|_{L^{2k}},
\end{equation}
where $C=C(s,k,M)>0$. Note that $|\partial_xg| \le |g|=\frac{1}{2}e^{-|x|}$ a.e., and $\phi^\frac{1}{2}$ is $\eta^\frac{1}{2}$ moderate, we have
\begin{equation}
\begin{aligned}
&\|\phi^\frac{1}{2}g*F\|_{L^{2k}} \le \|\eta^\frac{1}{2}g\|_{L^1}\|\phi^\frac{1}{2}F\|_{L^{2k}}\le C\Big(\|\phi^\frac{1}{2}u\|_{L^{2k}}+\|\phi^\frac{1}{2}u_x\|_{L^{2k}}\Big),\\
&\|\phi^\frac{1}{2}\partial_xg*uu_xv_x\|_{L^{2k}} \le \|\eta^\frac{1}{2}\partial_xg\|_{L^1}\|\phi^\frac{1}{2}uu_xv_x\|_{L^{2k}} \le C\|\phi^\frac{1}{2}u\|_{L^{2k}}.
\end{aligned}
\end{equation}
Inserting (59) into (58) is given by
\begin{equation}
\frac{d}{dt}\|\phi^\frac{1}{2}u\|_{L^{2k}} \le C\Big(\|\phi^\frac{1}{2}u\|_{L^{2k}}+\|\phi^\frac{1}{2}u\|_{L^{2k}}\Big).
\end{equation}
Similar to (60), we claim that
\begin{equation*}
\begin{aligned}
&\frac{d}{dt}\Big(\|\phi^\frac{1}{2}\rho\|_{L^{2k}}+\|\phi^\frac{1}{2}\rho_x\|_{L^{2k}}+\|\phi^\frac{1}{2}u\|_{L^{2k}}+\|\phi^\frac{1}{2}u_x\|_{L^{2k}}+\|\phi^\frac{1}{2}v\|_{L^{2k}}+\|\phi^\frac{1}{2}v_x\|_{L^{2k}}\Big)\\
\le &C(\|\phi^\frac{1}{2}\rho\|_{L^{2k}}+\|\phi^\frac{1}{2}\rho_x\|_{L^{2k}}+\|\phi^\frac{1}{2}u\|_{L^{2k}}+\|\phi^\frac{1}{2}u_x\|_{L^{2k}}+\|\phi^\frac{1}{2}v\|_{L^{2k}}+\|\phi^\frac{1}{2}v_x\|_{L^{2k}}\Big).
\end{aligned}
\end{equation*}
By the Gronwall's inequality to obtain
\begin{equation*}
\begin{aligned}
&\Big(\|\phi^\frac{1}{2}\rho\|_{L^{2k}}+\|\phi^\frac{1}{2}\rho_x\|_{L^{2k}}+\|\phi^\frac{1}{2}u\|_{L^{2k}}+\|\phi^\frac{1}{2}u_x\|_{L^{2k}}+\|\phi^\frac{1}{2}v\|_{L^{2k}}+\|\phi^\frac{1}{2}v_x\|_{L^{2k}}\Big)\\
\le &e^{CT}(\|\phi^\frac{1}{2}\rho_0\|_{L^{2k}}+\|\phi^\frac{1}{2}\rho_{0x}\|_{L^{2k}}+\|\phi^\frac{1}{2}u_0\|_{L^{2k}}+\|\phi^\frac{1}{2}u_{0x}\|_{L^{2k}}+\|\phi^\frac{1}{2}v_0\|_{L^{2k}}+\|\phi^\frac{1}{2}v_{0x}\|_{L^{2k}}\Big).
\end{aligned}
\end{equation*}

Applying (58) with $k=\infty$ and $\phi$ weight,
\begin{equation}
\frac{d}{dt}\|\phi u\|_{L^\infty} \le C\|\phi u\|_{L^\infty}+\|\phi g*F\|_{L^\infty}+\|\phi\partial_xg*uu_xv_x\|_{L^\infty}.
\end{equation}
As $\phi$ is $\eta$-moderate, we have
\begin{equation}
\begin{aligned}
\|\phi g*F\|_{L^\infty} \le &\|\eta g\|_{L^\infty}\|\phi F\|_{L^1}\\
\le &C\Big(\|\phi^\frac{1}{2}u\|_{L^2}\|\phi^\frac{1}{2}u_x\|_{L^2}+\|\phi^\frac{1}{2}u_x\|_{L^2}^2+\|\phi^\frac{1}{2}u\|_{L^2}\|\phi^\frac{1}{2}v_x\|_{L^2}+\|\phi^\frac{1}{2}\rho\|_{L^2}^2\Big)\\
\le &Ce^{CT},
\end{aligned}
\end{equation}
\begin{equation}
\|\phi\partial_xg uu_xv_x\|_{L^\infty} \le \|\eta g\|_{L^\infty}\|\phi uu_xv_x\|_{L^1}\le C\|\phi^\frac{1}{2}u\|_{L^2}\|\phi^\frac{1}{2}u_x\|_{L^2}\le Ce^{CT}.
\end{equation}
Taking (62) and (63) into (61) yields
\begin{equation}
\frac{d}{dt}\|\phi u\|_{L^\infty} \le C\|\phi u\|_{L^\infty}+Ce^{CT}.
\end{equation}
We can deal with $\phi\rho$, $\phi\rho_x$, $\phi u_x$, $\phi v$ and $\phi v_x$ as (64) and end up with
\begin{equation}
\frac{d}{dt}B(t) \le CB(t)+Ce^{CT},
\end{equation}
where $B(t)=\|\phi\rho\|_{L^\infty}+\|\phi\rho_x\|_{L^\infty}+\|\phi u\|_{L^\infty}+\|\phi u_x\|_{L^\infty}+\|\phi v\|_{L^\infty}+\|\phi v_x\|_{L^\infty}$.
By virtue of Gronwall's inequality to (65) we complete the proof of Theorem 3.4.
\end{proof}

\begin{remark}
Theorem 3.1, 3.3, 3.5 and Corollary 3.4 tell us the solution $u$ and $v$ and only exponential decay for $\alpha \in (0,1]$, while for $\rho$, $m$ and $n$, the constant $\alpha>0$ is arbitrary. Moreover, if the initial data satisfy
$$\Big(\partial_x^j \rho_0,\partial_x^j u_0,\partial_x^j v_0\Big) \sim \mathcal{O}(e^{-\alpha|x|}),\ \ \ as\ \ \ |x| \rightarrow \infty,$$
it follows that
$$\Big(\partial_x^j \rho,\partial_x^j u,\partial_x^j v\Big) \sim \mathcal{O}(e^{-\alpha|x|}),\ \ \ as\ \ \ |x| \rightarrow \infty,$$
as Sobolev index $s$ large enough.
\end{remark}

\begin{remark}
If we choose weighted function
$$\psi(x)=\psi_{a,b,c,d}(x)=e^{a|x|^b}(1+|x|)^c\log(e+|x|)^d,$$
then it follows that the solution is exponential decay, power decay and logarithmic decay. More details can be found in [23].
\end{remark}

As the first equation of 3NS is in conservation form, it follows that we can weaken the decay conditions in Theorem 3.3, 3.5 and Corollary 3.4.
\begin{theorem}
Let $(\rho_0,u_0,v_0) \in H^s \times H^s \times H^{s-1}$ with $s>\frac{5}{2}$, $T>0$ be the maximal existence time, and the corresponding solution $(\rho,u,v) \in C([0,T);H^s \times H^s \times H^{s-1})$. If there exists $\alpha \in (0,1]$ and $\beta \in (\frac{\alpha}{3},\alpha)$ such that the initial data satisfy
$$\rho_0(x) \sim o(e^{-\alpha|x|}),\ \ \ \rho_{0x}(x),u_0(x),u_{0x}(x),v_0(x),v_{0x}(x) \sim \mathcal{O}(e^{-\beta|x|}), \ \ \ |x|\rightarrow \infty,$$
then
$$\rho(t,x) \sim o(e^{-\alpha|x|}), \ \ \ |x|\rightarrow \infty.$$
\end{theorem}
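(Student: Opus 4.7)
The plan is to exploit the conservation form of 3NS$_1$,
\[
\rho_t + (\rho uv)_x = 0 \quad\Longleftrightarrow\quad \rho_t + uv\,\rho_x = -(uv)_x\,\rho,
\]
and propagate the strict $o(e^{-\alpha|x|})$ decay of $\rho_0$ along the characteristics of the velocity field $uv$. Because the conclusion concerns only $\rho$, no cubic bootstrap on the full system is needed: the argument reduces to integrating a scalar linear ODE along each streamline.

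Fix $t_0\in[0,T)$ and work on $[0,t_0]$. Since $\beta<\alpha\le 1$ places $\beta$ in $(0,1)$, and $\rho_0\sim o(e^{-\alpha|x|})$ in particular forces $\rho_0\sim\mathcal{O}(e^{-\beta|x|})$, the hypotheses of Theorem 3.3 and Corollary 3.4 hold with decay rate $\beta$. I would invoke them to get
\[
|u(t,x)|+|v(t,x)|+|u_x(t,x)|+|v_x(t,x)|\le C\,e^{-\beta|x|},\qquad (t,x)\in[0,t_0]\times\mathbb{R},
\]
so that $\Lambda_0:=\|uv\|_{L^\infty([0,t_0]\times\mathbb{R})}+\|(uv)_x\|_{L^\infty([0,t_0]\times\mathbb{R})}<\infty$. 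Because $s>5/2$ embeds $H^s$ into $C_b^1$, the characteristic ODE
\[
\partial_t X(t,y)=uv(t,X(t,y)),\qquad X(0,y)=y,
\]
admits a unique global $C^1$-flow; $X(t,\cdot)$ is a diffeomorphism of $\mathbb{R}$ for each $t\in[0,t_0]$, and the displacement is uniformly bounded, $|X(t,y)-y|\le t_0\Lambda_0=:\Lambda$. Along this flow the transport equation integrates explicitly to
\[
\rho(t,X(t,y))=\rho_0(y)\,G(t,y),\qquad G(t,y):=\exp\!\Bigl(-\int_0^t(uv)_x(s,X(s,y))\,ds\Bigr),
\]
with $|G(t,y)|\le e^{t_0\Lambda_0}=:E$ uniformly in $y\in\mathbb{R}$.

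To finish, for given $x\in\mathbb{R}$ I set $y=y(t,x):=X(t,\cdot)^{-1}(x)$. The uniform bound $|X-y|\le\Lambda$ gives both $\bigl||x|-|y|\bigr|\le\Lambda$, so $|y|\to\infty$ iff $|x|\to\infty$, and the weight comparison $e^{\alpha|x|}\le e^{\alpha\Lambda}e^{\alpha|y|}$. Therefore
\[
e^{\alpha|x|}\,|\rho(t,x)|\le E\,e^{\alpha\Lambda}\,\bigl(e^{\alpha|y|}\,|\rho_0(y)|\bigr),
\]
and the right-hand side tends to $0$ as $|x|\to\infty$ by the hypothesis $\rho_0\sim o(e^{-\alpha|x|})$. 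This yields $\rho(t,\cdot)\sim o(e^{-\alpha|x|})$ for every $t\in[0,T)$.

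The only subtlety I anticipate is preserving the strict ``little-$o$'' rather than merely $\mathcal{O}$ when transferring decay from the Lagrangian label $y$ to the Eulerian coordinate $x=X(t,y)$; the uniform displacement bound $|X-y|\le\Lambda$ makes this transfer harmless, since the $x$-weight is absorbed into a bounded multiplicative factor times the $y$-weight. The hypothesis $\beta<\alpha\le 1$ is essential so that Theorem 3.3 and Corollary 3.4 apply with rate $\beta$; the lower bound $\beta>\alpha/3$ enters only indirectly, via these prior decay results, and plays no further role in the transport-along-characteristics step.
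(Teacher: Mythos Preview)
Your proof is correct, but it follows a genuinely different route from the paper's.

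The paper works in Eulerian coordinates: it integrates $\rho_t+(\rho uv)_x=0$ in time at a fixed spatial point to obtain
\[
\rho(t,x)=\rho_0(x)-\int_0^t (uv\rho_x)(s,x)\,ds-\int_0^t\bigl((u_xv+uv_x)\rho\bigr)(s,x)\,ds,
\]
then invokes Theorem~3.3 with rate $\beta$ to get $\rho,\rho_x,u,u_x,v,v_x\sim\mathcal{O}(e^{-\beta|x|})$ uniformly in $t$, so that each integrand, being a product of three such factors, is $\mathcal{O}(e^{-3\beta|x|})$. The hypothesis $\beta>\alpha/3$ is used precisely here, to conclude $\mathcal{O}(e^{-3\beta|x|})=o(e^{-\alpha|x|})$; combined with $\rho_0\sim o(e^{-\alpha|x|})$ this gives the claim.

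Your Lagrangian argument bypasses this completely: integrating the scalar ODE along characteristics, you need only $uv,(uv)_x\in L^\infty_{t,x}$, which already follows from Sobolev embedding without any appeal to Theorem~3.3. In particular your proof shows that neither the decay hypothesis on $\rho_{0x},u_0,u_{0x},v_0,v_{0x}$ nor the constraint $\beta>\alpha/3$ is actually needed for the conclusion; your closing remark that this lower bound ``enters only indirectly, via these prior decay results'' understates the point, since it does not enter your argument at all. The paper's approach is slightly more elementary (no flow map), but yours yields a sharper statement and makes transparent why the strict $o$-decay of $\rho_0$ alone governs the strict $o$-decay of $\rho$.
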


\begin{proof}
Without loss of generality, we only consider $x \rightarrow +\infty$. Integrating 3NS$_1$ with respect to $t$, we have
\begin{equation}
\rho(t,x)-\rho_0(x)+\int_0^t (uv\rho_x)(s,x)ds+\int_0^t ((u_xv+uv_x)\rho)(s,x)ds=0.
\end{equation}
By virtue of Theorem 3.3, we have already proved
$$\rho(t,x),\rho_x(t,x),u(t,x),u_x(t,x),v(t,x),v_x(t,x) \sim \mathcal{O}(e^{-\beta x}), \ \ \ as \ x \rightarrow +\infty,$$
uniformly in the time interval $[0,T)$. Thus,
\begin{equation*}
\begin{aligned}
\int_0^t uv\rho_x(s,x)ds, \int_0^t ((u_xv+uv_x)\rho)(s,x)ds &\sim \mathcal{O}(e^{-3\beta x})\\
&\sim o(e^{-\alpha x}).
\end{aligned}
\end{equation*}
Together with (66) and the initial condition $\rho_0(x) \sim o(e^{-\alpha x})$ as $x \rightarrow +\infty$, we complete the proof.
\end{proof}

\section{Traveling wave solutions}
In this section, we will establish that 3NS doesn't admit nontrivial symmetrical traveling wave solutions. As we know, many CH type system admits peakon (peaked traveling wave solution) with form $p(t)e^{-|x-ct|}$, where $p(t)$ is amplitude and $c$ is velocity. It is obvious the traveling wave solution in this form is symmetrical with axis $x=ct$, and it is a weak solution, because it has no derivative at point $x=ct$, and the profile remains smooth at both interval $x<ct$ and $x>ct$. Thus we give two important definitions at first.

\begin{definition}(Symmetrical solution)
The solution $z(t,x)=(\rho,u,v)$ to 3NS is $x$-symmetric if there exists a function $b(t) \in C^1(0,+\infty)$ such that
$$z(t,x)=z(t,2b(t)-x),\ \ \ \forall t \in [0,\infty),\ \ \ a.e.,$$
the function $b(t)$ is called the symmetric axis of $z(t,x)$.
\end{definition}

\begin{definition}(Weak solution)
Let $\mathcal{F}(\mathbb{R})=\{z:z=(\rho,u,v) \in C([0,\infty),H^1 \times H^2 \times H^2)\}$. The solution $z(t,x)$ is called weak, if for any test function $\phi \in C_0^\infty([0,\infty)\times \mathbb{R})$ and $z(t,x) \in \mathcal{F}(\mathbb{R})$ satisfies

\begin{equation}
\begin{aligned}
&<\rho,\phi_t>+<\rho uv,\phi_x>=0,\\
&<u,(1-\partial_x^2)\phi_t>-<4uvu_x+\rho^2u-uu_xv_{xx},\phi>+<2uu_xv_x,\phi_x>+<uvu_x,\phi_{xx}>=0,\\
&<v,(1-\partial_x^2)\phi_t>-<4uvv_x-\rho^2v-u_{xx}vv_x,\phi>+<2u_xvv_x,\phi_x>+<uvv_x,\phi_{xx}>=0,
\end{aligned}
\end{equation}
where $<,>$ denotes the distributions on $(t,x).$
\end{definition}

Before we show our main result in this section, let's derive the following lemma firstly.

\begin{lemma}
Suppose $Z(x)=(P,U,V) \in \mathcal{F}(\mathbb{R})$. If for any test function $\phi \in C_0^\infty([0,\infty);\mathbb{R})$, the following integral equations holds
\begin{equation}
\begin{aligned}
&<-cP,\phi_x>+<PUV,\phi_x>=0,\\
&<-cU,(1-\partial_x^2)\phi_x>-<4UVU_x+P^2U-UU_xV_{xx},\phi>+<2U_xV_xU,\phi_x>+<UVU_x,\phi_{xx}>=0,\\
&<-cV,(1-\partial_x^2)\phi_x>-<4UVV_x-P^2U-U_{xx}VV_x,\phi>+<2U_xV_xV,\phi_x>+<UVV_x,\phi_{xx}>=0,
\end{aligned}
\end{equation}
then for any fixed $t_0>0$, $z(t,x)=Z(x-c(t-t_0))$ is a weak solution for 3NS.
\end{lemma}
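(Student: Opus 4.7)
The plan is to substitute the traveling-wave ansatz $z(t,x)=Z(x-c(t-t_0))$ directly into the weak formulation (67) of Definition 4.2 and perform the change of variable $\xi=x-c(t-t_0)$, so that each of the three time-dependent weak equations reduces to the corresponding line of the stationary weak form (68).

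To carry this out, fix an arbitrary test function $\phi\in C_0^\infty([0,\infty)\times\mathbb{R})$ and set $\psi(t,\xi):=\phi(t,\xi+c(t-t_0))$. Because the map $(t,x)\mapsto(t,x-c(t-t_0))$ is a smooth volume-preserving diffeomorphism, $\psi$ is again a compactly supported smooth test function and $\phi(t,x)=\psi(t,x-c(t-t_0))$. Differentiating this identity yields the chain-rule relations
\begin{align*}
\phi_t &= \psi_t - c\,\psi_\xi, \qquad \phi_x = \psi_\xi, \qquad \phi_{xx} = \psi_{\xi\xi}, \\
(1-\partial_x^2)\phi_t &= (1-\partial_\xi^2)\psi_t - c\,(1-\partial_\xi^2)\psi_\xi,
\end{align*}
where every right-hand side is evaluated at $(t,\xi)=(t,x-c(t-t_0))$. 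Substituting $\rho=P(\xi)$, $u=U(\xi)$, $v=V(\xi)$ into (67) and changing variables in $x$ (Jacobian equal to $1$) splits each of the three weak equations into a part containing $\psi_t$ and a part containing only spatial derivatives of $\psi$; the $\psi_t$ parts integrate to zero in $t$ because $P,U,V$ are $t$-independent and $\psi$ has compact support, while each spatial part is exactly the corresponding line of (68) tested against $\chi(\xi):=\psi(t,\xi)$ for each fixed $t$.

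For instance, the first line of (67) transforms into
\[
\int_0^\infty\!\!\int_\mathbb{R} P(\xi)\psi_t(t,\xi)\,d\xi\,dt + \int_0^\infty\!\!\int_\mathbb{R}\bigl(-cP(\xi)+P(\xi)U(\xi)V(\xi)\bigr)\psi_\xi(t,\xi)\,d\xi\,dt;
\]
the first integral vanishes upon integrating $\psi_t$ in $t$ against the $t$-independent factor $P(\xi)$, and the second vanishes since the first equation of (68) yields $\int(-cP+PUV)\chi_\xi\,d\xi=0$ for every $\chi\in C_0^\infty(\mathbb{R})$. The analogous reduction applied to the $u$- and $v$-equations produces the second and third equations of (68) tested against $\psi(t,\cdot)$, which also vanish by hypothesis. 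Reassembling, (67) is satisfied and $z(t,x)=Z(x-c(t-t_0))$ is therefore a weak solution of 3NS.

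The main obstacle is the bookkeeping for the $u$- and $v$-equations, where the operator $(1-\partial_x^2)\phi_t$ contributes both a $\psi_t$-piece and higher-order spatial pieces, while $\phi_x$ and $\phi_{xx}$ independently contribute to terms tested against $\psi_\xi$ and $\psi_{\xi\xi}$. One must verify that the coefficients produced by $-c\,(1-\partial_\xi^2)\psi_\xi$ combine cleanly with the contributions from $\phi_x$ and $\phi_{xx}$ to reproduce exactly the operators $-cU(1-\partial_\xi^2)$ and $-cV(1-\partial_\xi^2)$ appearing in (68); this is a direct matching argument once the chain rule is applied correctly, after which the hypothesis (68) closes the proof.
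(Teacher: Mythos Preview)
Your proposal is correct and follows essentially the same approach as the paper: both substitute the traveling-wave ansatz into the weak formulation, pass to the moving frame via the shifted test function $\psi(t,\xi)=\phi(t,\xi+c(t-t_0))$ (the paper's $\psi_c$), use the $t$-independence of $Z$ together with compact support to eliminate the $\psi_t$-contributions by integrating in $t$, and then identify the remaining spatial pairings with the stationary equations (68). The only cosmetic difference is that the paper first reduces by density to test functions in $C_0^1([0,\infty);C_0^3(\mathbb{R}))$ and sets $t_0=0$, whereas you work directly with $\phi\in C_0^\infty$ and general $t_0$; the computations are otherwise identical.
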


\begin{proof}
Arguing by density, it is sufficient to consider the test functions belongs to $C_0^1([0,\infty);C_0^3(\mathbb{R}))$. Without loss of generality, let $t_0=0$. Assume $\psi \in C_0^1([0,\infty);C_0^3(\mathbb{R}))$, and define $\psi_c=\psi(t,x+ct)$, it is obvious that
\begin{equation}
\partial_c(\psi_c)=(\psi_x)_c,\ \ \ \partial_t(\psi_c)=(\psi_t)_c+c(\psi_x)_c.
\end{equation}
Suppose $z(t,x)=Z(x-c(t-t_0))$. It is easy to check
\begin{equation}
\begin{cases}
&<u,(1-\partial_x^2)\psi_t>=<U,(1-\partial_x^2)(\psi_t)_c>=<U,(1-\partial_x^2)[\partial_t(\psi_c)-c(\psi_c)_x]>,\\
&<uvu_x,\psi>=<UVU_x,\psi_c>,\ \ \ <\rho^2u,\psi>=<P^2U,\psi_c>,\\
&<uu_xv_{xx},\psi>=<UU_xV_{xx},\psi_c>,\ \ \ <u_xv_xu,\psi_x>=<U_xV_xU,(\psi_c)_x>,\\
&<uvu_x,\psi_{xx}>=<UVU_x,(\psi_c)_{xx}>.
\end{cases}
\end{equation}
As $Z=Z(x)$ is independent of parameter $t$, thus we can choose $T>0$ large enough such that it doesn't belong to the support of $\psi_c$, which leads
\begin{equation}
\begin{aligned}
<U,(1-\partial_x^2)\partial_t(\psi_c)>&=\int_\mathbb{R}U(x)\int_0^\infty (1-\partial_x^2)\partial_t(\psi_c)dtdx\\
&=\int_\mathbb{R}U(x)[(1-\partial_x^2)\psi_c(T,x)-(1-\partial_x^2)\psi_c(0,x)]dx\\
&=0.
\end{aligned}
\end{equation}
Taking (70) and (71) into (67)$_2$, we have
\begin{equation*}
\begin{aligned}
&<u,(1-\partial_x^2)\phi_t>-<4uvu_x+\rho^2u-uu_xv_{xx},\phi>+<2uu_xv_x,\phi_x>+<uvu_x,\phi_{xx}>\\
&=<-cU,(1-\partial_x^2)\phi_x>-<4UVU_x+P^2U-UU_xV_{xx},\phi>+<2U_xV_xU,\phi_x>+<UVU_x,\phi_{xx}>\\
&=0,
\end{aligned}
\end{equation*}
where we have applied (68)$_2$ with $\psi(x)=\psi_c(t,x)$. Therefore $u(t,x)=U(x-c(t-t_0))$ is a weak solution of 3NS$_2$. With a similar argument, we can prove $z(t,x)=Z(x-c(t-t_0))$ is a weak solution to 3NS. This completes the proof of Lemma 4.3.
\end{proof}

Finally, we state the main result in this section.

\begin{theorem}
If $z=(\rho,u,v)$ is a unique weak traveling wave solution of 3NS, then it is not $x$-symmetrical unless $\rho\equiv 0$.
\end{theorem}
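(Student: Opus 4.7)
The plan is to exploit an even/odd parity argument at the level of the traveling wave profile. If $z(t,x)=Z(x-ct)$ is a weak traveling wave with $x$-symmetry axis $b(t)$, the identity $Z(x-ct)=Z(2b(t)-x-ct)$, rewritten in $y=x-ct$, reads $Z(y)=Z(2[b(t)-ct]-y)$ and hence forces $b(t)-ct$ to be constant in $t$ (as $Z$ is not identically constant); after translating the spatial coordinate I may therefore assume that the profile $Z=(P,U,V)$ is even in $\xi=x-ct$. Consequently $P,U,V$ are even, $P',U',V'$ are odd, $U'',V''$ are even, and therefore $M:=U-U''$ and $N:=V-V''$ are even while $M',N'$ are odd.

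Plugging this ansatz into 3NS in $(\rho,m,n)$-form yields the three distributional ODEs
\begin{equation*}
-cP'+(PUV)'=0,\quad -cM'+3MU'V+M'UV+P^2U=0,\quad -cN'+3NUV'+N'UV-P^2V=0,
\end{equation*}
the first of which integrates to the conservation law $P(UV-c)=A_1$ for some $A_1\in\mathbb{R}$. In each of the second and third equations, the first three terms are odd in $\xi$, while the final source term is even. Since a distribution which is simultaneously odd and even must vanish, splitting by parity gives in particular
\begin{equation*}
P^{2}U\equiv 0\quad\text{and}\quad P^{2}V\equiv 0,
\end{equation*}
together with two odd-parity relations that carry no further even-part content.

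Now set $E=\{\xi:P(\xi)\neq 0\}$. Since $P\in H^{1}\hookrightarrow C(\mathbb{R})$, $E$ is open, and the identities above force $U\equiv V\equiv 0$ on $E$. The conservation law then reduces on $E$ to $-cP=A_{1}$, so when $c\neq 0$ the function $P$ equals the constant $-A_{1}/c$ on each connected component of $E$. Continuity of $P$ at the boundary of any such component --- either at a finite endpoint where $P$ must vanish, or at $\pm\infty$ where membership in $H^{1}$ forces $P\to 0$ --- gives $A_{1}=0$, which is incompatible with $E\neq\emptyset$. Hence $E=\emptyset$, i.e.\ $P\equiv 0$.

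The main obstacle I anticipate is the stationary case $c=0$, in which the conservation law collapses to $PUV=A_{1}$ and is compatible with $P\not\equiv 0$ on $E$ together with $U\equiv V\equiv 0$ there. To rule this out I would exploit that $U,V\in H^{2}\hookrightarrow C^{1}$, so $U,V,U',V'$ all vanish on $\overline{E}$; feeding this Cauchy data into the remaining odd-parity ODEs --- which on $E^{c}$ reduce to the two-component Novikov traveling wave system with the $P^{2}$ sources removed --- and invoking the uniqueness hypothesis of the theorem should force $(U,V)\equiv(0,0)$ globally, leaving $z=(P,0,0)$, which is a nontrivial traveling wave solution of 3NS only when $P\equiv 0$.
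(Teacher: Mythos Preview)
Your parity argument is sound and reaches the key identities $P^{2}U\equiv 0$, $P^{2}V\equiv 0$ by a different route than the paper. The paper never passes to an even profile; it works at the weak formulation, introduces the reflected test function $\varphi_b(t,x)=\varphi(t,2b(t)-x)$, uses $z=z_b$ to rewrite each pairing, substitutes $\varphi\mapsto\varphi_b$ and subtracts, then localizes in time with a mollifier to obtain an identity which, compared term by term with the traveling-wave profile equation of Lemma~4.3, isolates $\rho^{2}u$ (and, by the analogous computation, $\rho^{2}v$) as the sole surviving term. Your even/odd splitting accomplishes the same cancellation in one stroke once the profile is centred, and is more transparent; the paper's test-function approach has the minor advantage that the speed $c=\dot b(t_0)$ drops out of the computation, so the preliminary step $b(t)-ct=\mathrm{const}$ is not needed.

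Note, however, that the paper's own proof \emph{stops} at $\rho^{2}u\equiv\rho^{2}v\equiv 0$; the passage to $\rho\equiv 0$ is not carried out there. Your additional argument for $c\neq 0$ via the conserved quantity $P(UV-c)=0$ is correct and goes beyond the paper. The $c=0$ sketch, on the other hand, contains a genuine error: your closing claim that $(P,0,0)$ ``is a nontrivial traveling wave solution of 3NS only when $P\equiv 0$'' is false. With $u=v\equiv 0$ every term in each equation of 3NS vanishes identically, so any even $P\in H^{1}$ yields a nontrivial stationary $x$-symmetric weak solution with $\rho\not\equiv 0$. No uniqueness hypothesis can rule this out, and the theorem as literally stated is simply not true for $c=0$; one must either restrict to $c\neq 0$ or accept the weaker conclusion $\rho^{2}u=\rho^{2}v=0$, which is all the paper actually establishes.
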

\begin{proof}
Suppose $z(t,x)$ is an $x$-symmetrical solution of 3NS. It is sufficient to choose test function $\varphi \in C_0^1([0,\infty);C_0^3(\mathbb{R}))$. Let
$$\varphi_b(t,x)=\varphi(t,2b(t)-x),\ \ \ b(t) \in C^1(\mathbb{R}).$$
It is easy to check that $\varphi_b$ has the following properties
\begin{equation}
\begin{cases}
&(\varphi_b)_b=\varphi,\ \ \ \partial_x(\varphi_b)=-(\partial_x\varphi)_b,\\
&\partial_t(\varphi_b)=(\partial_t\varphi)_b+2\dot{b}(\partial_x\varphi)_b,
\end{cases}
\end{equation}
where $\dot{b}$ denotes the derivative with respect to parameter $t$, and moreover,
\begin{equation}
\begin{cases}
&<u_bv_b\partial_xu_b,\varphi>=-<uvu_x,\varphi_b>,\ \ \ <u_b(\partial_xu_b)(\partial_x^2v_b),\varphi>=-<uu_xv_{xx},\varphi_b>,\\
&<\rho_b^2u_b,\varphi>=<\rho^2 u,\varphi_b>,\ \ \ <(\partial_xu_b)(\partial_xv_b)u_b, \varphi_x>=-<u_xv_xu,\partial_x\varphi_b>,\\
&<u_bv_b\partial_xu_b,\varphi_{xx}>=-uvu_x,\partial_x^2\varphi_b>.
\end{cases}
\end{equation}

Since $z$ is $x$-symmetrical, combining (67)$_2$, (72) and (73), we have
\begin{equation}
\begin{aligned}
&<u,(1-\partial_x^2)\varphi_t>-<4uvu_x+\rho^2u-uu_xv_{xx},\varphi>+<2uu_xv_x,\varphi_x>+<uvu_x,\varphi_{xx}>\\
=&<u,(1-\partial_x^2)(\partial_t\varphi_b+2\dot{d}\partial_x\varphi_b>-<-4uvu_x+\rho^2u+uu_xv_{xx},\varphi_b>\\
&-<2u_xv_xu,\partial_x\varphi_b>-<uvu_x,\partial_x^2\varphi_b>=0.
\end{aligned}
\end{equation}
Taking place $\varphi$ by $\varphi_b$ in (74), due to $(\varphi_b)_b=\varphi$, it follows that
\begin{equation}
<u,(1-\partial_x^2)(\partial_t\varphi+2\dot{b}\partial_x\varphi>+<4uvu_x-\rho^2u-uu_xv_{xx},\varphi>-<2u_xv_xu,\partial_x\varphi>-<uvu_x,\partial_x^2\varphi>=0.
\end{equation}
Subtracting (75) form (74), we obtain
\begin{equation}
<u,2\dot{b}\partial_x(1-\partial_x^2)\varphi>+<8uvu_x-2uu_xv_{xx},\varphi>-<4u_xv_xu,\partial_x\varphi>-<2uvu_x,\partial_x^2\varphi>=0.
\end{equation}

If we choose $\varphi_\epsilon(t,x)=\psi(x)\eta_\epsilon(t)$ in (76), where $\psi \in C_0^\infty(\mathbb{R})$ and $\eta_\epsilon \in C_0^\infty [0,\infty)$ is a mollifier that
$$\eta_\epsilon \rightarrow \delta(t-t_0),\ \ \ \epsilon \rightarrow 0.$$
This implies that
\begin{equation}
\begin{aligned}
&\int_\mathbb{R}\partial_x(1-\partial_x^2)\psi\int_0^\infty \dot{b}u\eta_\epsilon(t)dtdx+\int_\mathbb{R}\psi\int_0^\infty(4uvu_x-uu_xv_{xx})\eta_\epsilon(t)dtdx\\
-&\int_\mathbb{R}\partial_x\psi\int_0^\infty 2u_xv_xu\eta_\epsilon(t)dtdx-\int_\mathbb{R}\partial_x^2\psi\int_0^\infty uvu_x \eta_\epsilon(t)dtdx=0.
\end{aligned}
\end{equation}
Finally, letting $\epsilon \rightarrow 0$ in (77) yields
\begin{equation}
\begin{aligned}
&<-\dot{b}(t_0)u(t_0,x),\partial_x(1-\partial_x^2)\psi>-<(4uvu_x-uu_xv_{xx})(t_0,x),\psi>\\
&\ \ \ +<2u_xv_xu(t_0,x),\partial_x\psi>+<uvu_x(t_0,x),\partial_x^2\psi>=0.
\end{aligned}
\end{equation}
Set $c=\dot{b}(t_0)$, and comparing with (68)$_2$, we see if the traveling solution $u$ is $x$-symmetrical, we must constrain the distribution $<\rho^2u,\psi>\equiv0$, which means $\rho^2u\equiv 0$ by the arbitrariness of $\psi$. Similarly, one can check that $\rho^2v\equiv 0$ also holds for $(t,x)$ a.e.
\end{proof}

In fact, by virtue of $\rho^2u\equiv\rho^2v\equiv0$, if we consider the reduction $\rho\equiv0$, we have the following corollary.
\begin{corollary}
Suppose $z(t,x)$ is $x$-symmetrical. If $z=(u,v)$ is a unique weak solution of 2NS, then $z(t,x)$ is a traveling wave.
\end{corollary}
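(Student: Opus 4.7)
The plan is to run through the same symmetrisation argument used in the proof of Theorem 4.4, but now specialised to the reduced system $\rho\equiv 0$, which turns 3NS into 2NS. Assume $z=(u,v)$ is an $x$-symmetric weak solution of 2NS with symmetry axis $b(t)\in C^1$. Exactly as in Theorem 4.4, I would introduce $\varphi_b(t,x)=\varphi(t,2b(t)-x)$, exploit the two identities in (72)-(73), and subtract the identity obtained from replacing $\varphi$ by $\varphi_b$ from the original weak formulation. Because the $\rho^2 u$ and $\rho^2 v$ terms are now absent, no obstruction appears, and one arrives at the cleaner analogue of (76): a weak identity in $(t,x)$ involving only $u$, $v$ and their derivatives together with the factor $\dot b(t)$.

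Next, I would localise in time by testing against $\varphi_\epsilon(t,x)=\psi(x)\eta_\epsilon(t)$ with $\eta_\epsilon\to\delta(t-t_0)$, reproducing the limit that gave (78). The point is that the resulting spatial identity for $u(t_0,\cdot)$ and $v(t_0,\cdot)$ is precisely equation (68)$_2$ of Lemma 4.3 with $P\equiv 0$ and with wave speed $c:=\dot b(t_0)$; the corresponding identity for $v(t_0,\cdot)$ matches (68)$_3$. Hence by Lemma 4.3, the time translate $\tilde z(t,x):=z(t_0,x-\dot b(t_0)(t-t_0))$ is itself a weak solution of 2NS with the same data at $t=t_0$. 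Invoking the uniqueness hypothesis on weak solutions gives $z(t,x)=z(t_0,x-\dot b(t_0)(t-t_0))$ for $t\ge t_0$, which is precisely the statement that $z$ is a traveling wave of speed $\dot b(t_0)$.

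The one point needing care, and the main obstacle, is verifying that $\dot b(t)$ really is constant so that the expression ``traveling wave'' makes global sense. Once the previous paragraph produces a traveling wave representation with speed $\dot b(t_0)$ valid on $[t_0,\infty)$, applying the same argument at a second base point $t_1$ forces $\dot b(t_1)=\dot b(t_0)$; equivalently, the symmetry axis $b(t)$ moves with constant velocity, so $b(t)=b(0)+ct$. This closes the argument: $z(t,x)=Z(x-ct)$ with $Z(x)=z(0,x)$ is the desired traveling wave profile, and the case $\rho\not\equiv 0$ is already excluded by Theorem 4.4.
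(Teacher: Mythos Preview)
Your proposal is correct and is exactly the argument the paper has in mind: the paper gives no separate proof for the corollary, merely remarking that with $\rho\equiv 0$ the obstruction $\langle\rho^2 u,\psi\rangle$ vanishes, so the identity (78) obtained from the symmetry assumption coincides with (68) and Lemma~4.3 plus uniqueness finish the job. Your added paragraph on the constancy of $\dot b(t)$ fills in a detail the paper leaves implicit, and it is handled correctly.
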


\end{document}